

\documentclass[12pt,reqno,twoside]{amsart}
\usepackage{amsmath}
\usepackage{amssymb}

\usepackage{color}
\newcommand{\todo}[1]{{\color{blue} \sf To Do: [#1]}}

\newcommand{\ignore}[1] {}

\newcommand{\vre}{\varepsilon}



\newcommand\vare{\varepsilon}


\newcommand\NN{\mathbb N}
\newcommand\RR{\mathbb R}
\newcommand\ZZ{\mathbb Z}

\newcommand\TT{\mathbb T}

\newcommand\FF{\mathbb F}


\newcommand\cH{\mathcal{H}}


\newcommand\supp{\operatorname{supp}}


\newcommand\abs[1]{\left|#1\right|}

\newcommand\set[1]{\left\{{#1}\right\}}



\newtheorem{theorem}{Theorem}[section]
\newtheorem{lemma}[theorem]{Lemma}

\newtheorem{cor}[theorem]{Corollary}

\theoremstyle{definition}
\newtheorem{definition}[theorem]{Definition}

\theoremstyle{remark}
\newtheorem{remark}[theorem]{Remark}

\numberwithin{equation}{section}


\title[Effective equidistribution]{On Arnold's and Kazhdan's  \\ equidistribution problems}

\author{Alexander Gorodnik}
\address{School of Mathematics \\ University of Bristol \\ Bristol, U.K.}
\email{a.gorodnik@bristol.ac.uk}
\thanks{The first author was supported in part by EPSRC, ERC, and RCUK}

\author{Amos Nevo}
\address{Department of Mathematics, Technion}
\email{anevo@tx.technion.ac.il}
\thanks{The second author was supported by ISF grant}


\date{\today}



\begin{document}

\begin{abstract}
We consider isometric actions of lattices in semisimple algebraic groups on 
(possibly non-compact) homogeneous spaces with (possibly infinite)
invariant Radon measure. We assume that the action has a dense orbit, 
and demonstrate two novel and non-classical dynamical phenomena 
that  arise in this context. The first is the existence of a mean ergodic theorem even when the invariant measure is infinite,  
which implies the existence of an associated limiting distribution, possibly  different  than the invariant measure. 
The second is uniform quantitative equidistribution of all orbits in the space, which follows from a
quantitative mean ergodic theorem for such actions. In turn, these results imply quantitative ratio
ergodic theorems for isometric actions of lattices.  This sheds some unexpected light on certain equidistribution  problems posed by Arnol'd \cite{A} and also on the equidistribution conjecture for dense
subgroups of isometries formulated by Kazhdan \cite{K}.  We briefly describe the
general problem regarding ergodic theorems for actions of lattices on homogeneous spaces and its solution via the duality principle \cite{GN2}, and  give a number of examples to demonstrate our results. Finally, we also prove results on quantitative equidistribution for transitive actions.
\end{abstract}

\maketitle

\section{Equidistribution beyond amenable groups}

Let $G$ be a locally compact second countable (lcsc) group acting continuously
on an lcsc space $X$. 
Assume that $X$ carries a $\sigma$-finite $G$-invariant Radon measure $\mu$ of full support. 
Consider the following three natural conditions that often arise in practice. 
\begin{enumerate}
\item The action of $G$ on $X$ is transitive. 
\item The action of $G$ on $X$ has a unique invariant Radon measure.
\item The action of $G$ on $X$ is isometric.  
\end{enumerate}
Letting $\Gamma$ be a countable dense subgroup of $G$, consider the problem of formulating and establishing equidistribution results for the  $\Gamma$-orbits in $X$.  
 This problem has been studied in the past mainly in the case when the group $\Gamma$ is amenable and the measure $\mu$ is finite. It is a 
 compelling challenge to generalize the theory to the case  where the the group is non-amenable and the measure may be infinite. In particular, one would like to establish that a limiting distribution for the $\Gamma$-orbits exists, and study what its properties might be. 
 This challenge can be generalized further to the case where $X$ is any standard Borel space with $\sigma$-finite measure, where now the results sought are mean ergodic theorems in $L^p$ and pointwise ergodic theorems holding almost everywhere. 
 
 The possible choices of $G$, $X$ and $\Gamma$ include a large set of important  examples arising naturally in various branches of dynamics. We will discuss some of these examples below. 

  In the present paper we will concentrate on establishing equidistribution results with an effective rate of convergence for certain non-amenable groups,  and in particular  lattice subgroups of semisimple algebraic groups. A major ingredient in our considerations will be mean ergodic theorems for actions of these groups.

We note that a mean ergodic theorem
for spaces with infinite measure is a novel and distinctly non-classical phenomenon. 
Indeed, it is well-known (see \cite[Ch. 2]{Aa}) that for an action of a single
transformation on a space with infinite measure, no formulation of such a result is possible. Likewise, effective rate of orbit equidstribution 
is a phenomenon that does not arise in the ergodic theory of amenable groups, since the ergodic averages may converge arbitrarily slowly. 

\subsection{Historical background}

 The problem of extending ergodic theory to general countable groups was raised half a century ago by Arnol'd and Krylov  \cite{AK}. They have established equidistribution of dense free subgroups of $SO_3(\RR)$ acting on $S^2$, w.r.t. the word length, and have formulated the problem of establishing ergodic theorem for balls w.r.t. word length for actions of general  countable finitely generated groups. Motivated by the problems raised in \cite{AK}, Kazhdan \cite{K} has established that the orbits of certain dense $2$-generator subgroups of the isometry group of the plane satisfy  a ratio ergodic theorem, namely that for every $x\in X$ and any two bounded open sets $A_1$ and $A_2$ (with nice boundary) 
 $$\lim_{t\to \infty}\frac{\abs{\set{\gamma\in B_t \,;\, \gamma^{-1}x\in A_1}}}{\abs{\set{\gamma\in B_t \,;\, \gamma^{-1}x\in A_2}}}=\frac{m(A_1)}{m(A_2)}\,.$$
Here $B_t$ denotes the ball of radius $t$ w.r.t. the word length on the free semigroup, so that the counting is in effect governed by the weights given by convolution powers, and $m$ is Lebesgue measure on the plane.
 Kazhdan has raised in \cite{K} the question of extending this result to other dense subgroups of a Lie group $G$ acting on a homogeneous space $X=G/H$,  particularaly when the action is isometric.
Motivated by both \cite{AK} and \cite{K}  Guivarc'h in \cite{Gu1} has proved the mean ergodic theorem for actions of free groups on a probablity space generalizing von-Neumann's classical result, and in \cite{Gu2} has established a generalization of Kazhdan's ratio ergodic theorem for dense subgroups of  isometries of Euclidean spaces, the weights being given again by the convolution powers of a fixed probability measure on $\Gamma$. Guivarc'h has also raised the problem of establishing equidistribution  results in the generality of actions with a unique invariant measure.

\subsection {Ergodic theorems on homogeneous spaces : Arnold's problems}

Subsequently (see \cite{A} problems 1996-15 p. 115, 2002-16 p. 148) Arnol'd has revisited this topic and raised the following challenges.
Consider the standard Lorentzian form $Q(x,y,z)=x^2+y^2-z^2$ on $\RR^{2,1}$, and the identity component
of the group of isometries of the form, denoted by $G=SO^0(2,1)$. Under the standard linear action of
$G$,  the space $\RR^{2,1}$ decomposes into three invariant subsets of different types, as follows.
\begin{enumerate}
\item  the light cone $\mathcal{C}$, namely the set where the form vanishes, 
 \item the two-sheeted hyperboloid $\mathcal{H}$ given by $\set{x^2+y^2-z^2=1}$, each of whose components inherits a $G$-invariant  Riemannian structure of constant negative curvature isometric to hyperbolic plane.  Thus $\cH$ is a homogeneous space $G/K$ with compact stability group conjugate to $K=SO_2(\RR)\cong \TT$, 
\item the one-sheeted hyperboloid known as the de-Sitter space $\mathcal{S}$ given by $\set{x^2+y^2-z^2=-1}$, which inherits a $G$-invariant two-dimensional Lorentzian structure, and  is a homogeneous space $G/H$ with stability group conjugate to $H=SO^0(1,1)\cong \RR$. 
\end{enumerate}
The group $G$ has a natural action on the projectiviziation of the light cone, namely the usual action by fractional linear transformations of 
the circle. The circle forms the boundary of the hyperbolic plane and is denoted $\mathcal{B}$. 

Consider now any  lattice subgroup $\Gamma\subset G $. Then all orbits of the lattice in hyperbolic space are discrete, and all  orbits of the lattice on the boundary are dense. 
On the de-Sitter space, almost all $\Gamma$-orbits are dense,  but not all. For example, the $\Gamma$-orbit of a point is discrete  if the intersection its stability group with the lattice is a lattice in the stability group. Thus two problems that arise naturally and which were formulated by Arnol'd are 
\begin{enumerate}
\item establish equidistribution  of the lattice orbits on the boundary $\mathcal{B}$.
\item establish ergodic theorems for dense  lattice orbits in the de-Sitter space $\mathcal{S}$.
\end{enumerate}
The first problem was solved in \cite{G2} (see also \cite{gm,go} for generalisations),
and the second problem was solved 
by F.~Maucourant (unpublished) and in greater generality with explicit rate in \cite{GN2}.
The distribution of orbits for the action of $\Gamma$ on the light cone $\mathcal{C}$ was computed 
in \cite{l}, \cite{n}, \cite{G1}, \cite[Sec.~12]{GW}.

\ignore{
As to discrete lattice orbits in the de-Sitter space, it is of course natural to consider the asymptotics of the main term  in the counting function of orbit points in a ball. The latter problem was solved in  \cite{DRS}, with a simpler argument  in \cite{EM}. Effective estimates of the error term in the case of general homogeneous symmetric varieties are developed in  \cite{GN2}. 
}

\subsection{Ergodic theorems beyond amenable groups : some surprises}
We now turn to explicating the results alluded to above and to describing their general context. 

The study of the distribution of $G$-orbits in a space $X$ proceeds by fixing a family of bounded Borel  measures $\beta_t$ on $G$ for  $t\in \ZZ^+$ or $t\in \RR^+$.
The measures $\beta_t$ are not necessarily probability measures. For example, one important special case is when $B_t\subset G$ is a family of bounded sets of positive Haar measure, and we then fix a choice of growth rate function $V(t)$,  and define $\beta_t$ to be Haar measure on $B_t$ divided by $V(t)$. The growth function $V(t)$ may be of lower order of magnitude than $m_G(B_t)$, for example. 

We consider the operators, defined on a compactly supported test-function $f:X\to\mathbb{R}$ by 
$$\pi_X(\beta_t)f(x)=\int_G f(g^{-1}x)d\beta_t(g)\,.$$
Thus in the special case noted above 
$$\pi_X(\beta_t)f(x)=\frac{1}{V(t)}\int_{g\in B_t} f(g^{-1}x)dg\,.$$
 The properties of this family of operators provide the key to analyzing the distribution of the orbit $G\cdot x$. Of course, in the classical case of amenable groups acting by measure-preserving transformations on a probability space we have for a family of sets $B_t\subset G$ :
 \begin{enumerate}
 \item[(i)] the right choice of the growth function $V(t)$ is the total measure $ m_G(B_t)$, so that the operators above become averaging (i.e. Markov) operators, 
 \item[(ii)] the limit of the time averages as $t\to \infty$ is the space average of the function, when the probability measure is ergodic. In particular, the limiting distribution is $G$-invariant. 
\end{enumerate}

It turns out that the ergodic theory of non-amenable groups is full of surprises, and reveals several 
phenomena that have no analogues 
in classical amenable ergodic theory. 
\begin{enumerate}
\item the operators $\pi_X(\beta_t)$ may fail to converge, even when $\beta_t$ are normalized ball averages  w.r.t. a word metric and the action is an isometric action on a compact $G$-transitive space preserving Haar measure.
\item  the operators $\pi_X(\beta_t)$ may converge to a limit operator, but the limit may be different than the ergodic mean, 
even when $\beta_t$ are normalized ball averages  w.r.t. a word metric and the action is an isometric action on a compact $G$-transitive space preserving Haar measure.
\item When the invariant measure is infinite, the operator $\pi_X(\beta_t)$ associated with a family $B_t$ may converge for a choice of growth function $V(t)$ which is different than $m_G(B_t)$, with convergence for almost all points, or even for all points $x$ outside a countable set :
$$\lim_{t\to \infty} \frac{1}{V(t)}\int_{g\in B_t} f(g^{-1}x)dg=\int_X fd\nu_x\,.$$
\item The limit measure $\nu_x$ appearing in (3) may be non-invariant and depend non-trivially on the initial point $x$. Furthermore, the limit measure may be completely different  if the family of sets $B_t$ which are taken as the support of the measures $\beta_t$ is changed. 
\item The expression in (3) may converge for {\it each and every } $x\in X$,  but still the measure $\nu_x$ may be non-invariant, and it may depend on the initial point $x$ and on the family $B_t$. This may happen even when  the invariant measure is unique and even when the action is isometric. 

\item 
The operators $\pi_X(\beta_t)$ in (3) may converge with a {\it uniform rate of convergence}, valid for almost all points, 
or even for all points, namely 
$$\abs{ \frac{1}{V(t)}\int_{g\in B_t} f(g^{-1}x)dg-\int_X fd\nu_x}\le C(x,f) V(t)^{-\delta}\,.$$
This can happen in compact spaces and also non-compact spaces. 

\item  As a result, convergence of the ratios  : $$\frac{\abs{\set{\gamma\in B_t \,;\, \gamma^{-1}x\in A_1}}}{\abs{\set{\gamma\in B_t \,;\, \gamma^{-1}x\in A_2}}}$$ 
may take place at a uniform rate for almost all points. As before,  even for an isometric action (with infinite invariant measure)  the uniform rate may apply to all points,  but  $\nu_x$ appearing in the limiting expression $\frac{\nu_x(A_1)}{\nu_x(A_2)}$ may depend on $x$ and $B_t$.

\end{enumerate}
We remark  that (1) and (2) are implicit already in \cite{AK}, \cite{Gu1}, and that 
(1) has been noted explicitly in \cite{Bew} (see also Theorem \ref{th:free} below).

The phenomena described in (3) and (4) were first demonstrated by a pioneering result of Ledrappier \cite{l} on the distribution 
of orbits of lattice subgroups of $SL_2(\RR)$ in the real plane, see also \cite{lp1} and \cite{lp2}. 
Equivalently, the result applies to the action of a  lattice in $SO^0(2,1)$ on the light cone $\mathcal{C}$ above (see \cite[Th.~12.2]{GW} for more information).

The phenomena described in (5) for  isometric actions has been first demonstrated in  \cite[Cor.~1.4(ii)]{GW} (see also Theorem \ref{th:inf} below).

Regarding (6), note that mean and pointwise ergodic theorems 
with uniform rates for semisimple Kazhdan groups acting on probability spaces
have been established in \cite{n0,MNS,GN1}. The problem whether an ergodic theorem implies
equidistribution with rates for all points forms one of the main subjects of the present paper. The solution to this problem gives the phenomena described in (7) 
as immediate corollaries.

\subsubsection{The mean ergodic theorem and equidistribution in compact spaces }
Assume now that  the space $X$ is equipped with a $G$-invariant  probability  measure $\mu$. 
We say that $\pi_X(\beta_t)$ satisfies
the {\em mean ergodic theorem} in $L^2$  (with limit operator $\mathcal{P}$) if for every $f\in L^2(X,\mu)$,
\begin{equation}\label{eq:mean}
\left\|
\int_G f(g^{-1}x)d\beta_t(g)-\mathcal{P}f(x)\right \|_2\to 0\quad\hbox{as $t\to \infty$,}
\end{equation}
where $\mathcal{P}:L^2(X,\mu)\to L^2(X,\mu)$ is a linear operator,  
which may be different than the orthogonal projection on the space
of $G$-invariant function, see for instance Theorem \ref{th:free} below. 

The mean ergodic theorem is known to hold
for several large classes of lcsc groups, including general amenable groups (see \cite{n1} for a survey) and also for semisimple $S$-algebraic  groups and their lattice subgroups (see \cite{GN1} for a comprehensive discussion).
The next obvious question regarding the distribution of orbits is that of pointwise convergence
of the averages, namely whether for  every $f\in L^2(X,\mu)$ and  almost all $x\in X$ 
\begin{equation}\label{eq:pointwise}
\abs{\int_G f(g^{-1}x)d\beta_t(g)-\mathcal{P}f(x)} \longrightarrow 0\quad\hbox{as $t\to \infty$}.
\end{equation}

When the space $X$ is a compact metric space, one can consider sharpening the pointwise ergodic theorem in two material ways. 
The first is to establish {\it pointwise everywhere} convergence when $f$ is continuous, namely that (\ref{eq:pointwise}) 
holds for every point $x\in X$ without exception, in which case we say that the orbits of $G$ in $X$ are equidistributed.
It was noted in \cite{GN1} (based on an earlier argument due to Guivarc'h \cite{Gu1}) that for isometric actions 
on compact spaces with an invariant ergodic probability measure of  full support,  pointwise everywhere convergence of the averages for continuous functions {\it follows } 
from the mean ergodic theorem. 
This result has as a consequence the fact that such actions 
are in fact uniquely ergodic.  Thus unique ergodicity can be established via spectral arguments.

The second is to establish that the convergence in (\ref{eq:pointwise}) proceeds at a fixed rate, uniformly for every starting point, if the function 
$f$ is H\"older continuous, namely 
\begin{equation}\label{eq:quantitative}
\abs{\int_G f(g^{-1}x)d\beta_t(g)-\mathcal{P}f(x)} \le C(f,x,)E(t)
\end{equation}
where $E(t)\to 0$ as $t\to \infty$. In this case we say that the orbits have a uniform rate of equidistribution, and would like to estimate this rate. 

In the present paper we will establish equidistribution results with an effective uniform rate in two
significant cases, namely isometric and transitive action. To establish 
a quantitative version of these results for actions of general groups $G$ we will require  the spectral 
assumption of the existence of a spectral gap. Recall that a unitary representation has spectral gap if it has no
almost invariant sequence of unit vectors. We emphasize that this assumption is necessary for 
the conclusion to hold, and that its validity is a very common phenomenon.
 For example, all actions of groups with property $T$ have a spectral gap (in the orthogonal complement of the invariants). 

%

We also show that the convergence rate is uniform on the sets 
$$C^a(X)_1=\set{f\in C(X)\,;\, \sup_{x\in X} |f(x)|+ \sup_{x\neq y}\frac{\abs{f(x)-f(y)}}{d(x,y)^a}\le 1}$$
of H\"older continuous  functions
with H\"older norm bounded by one. 

Let us now describe some concrete instances of these results, when the action preserves a probability measure.  

\subsection{Uniform rate of equidistribution : some examples}
\subsubsection{Free groups}

Let $\mathbb{F}_r$ be a free group with $r$ generators, $r\ge 2$.
We denote by $\ell(\gamma)$ the length of an element $\gamma\in\Gamma$ w.r.t. the free 
generating set, and by $B_{2n}$ the ball of radius $2n$. 
We denote by $\vare_0 :\mathbb{F}_r\to\set{\pm 1}$ the sign character 
of the free group, taking the value $1$ on words of even length, and $-1$ on words of odd length. 

Given a unitary 
representation $\pi$ of $\mathbb{F}_r$ on a Hilbert space $\cH$, let $\cH^1$ denote the space of
invariants, and let $\cH^{\vare_0}$ 
denote the space realizing the character $\vare_0$.  Any vector $f_0\in \cH^{\vare_0}$ satisfies
$\pi(\gamma)f_0=(-1)^{\ell(\gamma)}f_0$.

Given a decreasing family of finite index subgroups $\Gamma_i$ of $\FF_r$, we denote
by $\hat \FF_r$ the profinite completion which is equipped with an invariant metric defined by
$$
d(\gamma_1,\gamma_2)=\max\{|\FF_r:\Gamma_i|^{-1}:\, \gamma_1^{-1}\gamma_2\notin \hat \Gamma_i\}
$$
for $\gamma_1,\gamma_2\in \hat \FF_r$.

\begin{theorem}\label{th:free}
\begin{enumerate}
\item Consider an isometric action of $\FF_r$ on a compact manifold $X$. 
Let $\mu$ be an ergodic  smooth probability measure on $X$ with full support  such that the representation of $\mathbb{F}_r$
on  $L^2_0(X,\mu)$ has a spectral gap. Then for every H\"older-continuous $f\in C^a(X)_1$ and every $x\in X$,
\begin{equation}\label{eq:mean_free}
\frac{1}{\#B_{2n}}\sum_{\gamma\in B_{2n}} f(\gamma^{-1}x)= \mathcal{P}f(x)+O\left(e^{-\theta_a n}\right)
\end{equation}
where 
\begin{itemize}
\item $\theta_a>0$ depends only on the spectral gap and $\dim (X)$, and
\item the operator $\mathcal{P}$ is given by 
$$
\mathcal{P}f=\int_X f\,d\mu+\frac{r-1}{r}\left(\int_X f\bar f_0\,d\mu\right)f_0,
$$
where $f_0\equiv 0$ when $\cH^{\vare_0}=0$ which is always the case when $X$ is connected.  Otherwise,
$\cH^{\vare_0}$ is $1$-dimensional, and $f_0$ denotes a unit vector that spans $\cH^{\vare_0}$.
\end{itemize}

\item Let $\Gamma_i\subset \FF_r$ be a decreasing family of finite index subgroups such that
\begin{itemize}
\item $|\Gamma_i:\Gamma_{i+1}|$ is uniformly bounded,
\item the family of representations $L_0^2(\FF_r/\Gamma_i)$ of $\FF_r$ satisfies  property $(\tau)$
(i.e., has uniform spectral gap).
\end{itemize}
Then for every  $f\in C^a(\hat \FF_r)_1$ and $x\in \hat\FF_r$,
\eqref{eq:mean_free} holds as well, with respect to the Haar probability measure $\mu$ on
$\hat \FF_r$, the pro-finite completion associated with the family $\Gamma_i$.
\end{enumerate}
\end{theorem}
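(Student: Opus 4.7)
\emph{Proof plan.} The approach is to first establish a quantitative $L^2$ mean ergodic theorem for the normalized ball averages $\pi_X(\beta_{2n}) = |B_{2n}|^{-1}\sum_{\gamma\in B_{2n}}\pi_X(\gamma)$, and then upgrade $L^2$ convergence to pointwise-everywhere convergence with a rate, by exploiting the isometry of the action together with the H\"older regularity of the test functions. For the $L^2$ theorem, decompose $B_{2n}$ into spheres and apply the Kesten-type spectral estimates for radial averages on $\FF_r$ (in the refined form due to Nevo and Nevo--Stein): on the orthogonal complement of $\cH^1\oplus \cH^{\vare_0}$ in $L^2(X,\mu)$, the sphere operator $\pi_X(\sigma_k)$ has norm bounded by $Ck\rho^k$ with $\rho=\rho(X)<1$ controlled by the spectral gap. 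On $\cH^{\vare_0}$ the sphere operator acts by $(-1)^k$, and the elementary identity
\[
\frac{1}{|B_{2n}|}\sum_{k=0}^{2n}(-1)^k|S_k| \;=\; \frac{(r-1)(2r-1)^{2n}}{r(2r-1)^{2n}-1} \;\longrightarrow\; \frac{r-1}{r}
\]
isolates the sign-character contribution to $\mathcal{P}$. Summing yields $\|\pi_X(\beta_{2n})f-\mathcal{P}f\|_2 \le C\rho^n\|f\|_2$.

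To pass from $L^2$ to every $x$, observe that because the action is isometric, every translate $\pi_X(\gamma)f$ of a function $f\in C^a(X)_1$ lies in $C^a(X)_1$, so $\pi_X(\beta_{2n})f$ has H\"older norm at most $1$. The limit $\mathcal{P}f$ is also H\"older with bounded norm: its invariant part is a constant, and when $\cH^{\vare_0}\neq 0$ the vector $f_0$ is locally constant on the connected components of $X$, hence smooth. Thus $g_n:=\pi_X(\beta_{2n})f-\mathcal{P}f$ is H\"older with bounded norm. By Chebyshev, $\mu\{|g_n|>\eta\}\le C^2\eta^{-2}\rho^{2n}$. Cover $X$ by balls of radius $\delta$; since $\mu$ is smooth and $d=\dim X$, each ball has mass $\ge c\delta^d$. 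Choose parameters so that $C^2\eta^{-2}\rho^{2n}<c\delta^d$: then every ball contains a point where $|g_n|\le\eta$, and H\"older continuity yields $|g_n(x)|\le \eta+2\delta^a$ at every $x\in X$. Optimizing $\eta=\delta^a$ and $\delta^{d+2a}\asymp\rho^{2n}$ gives $|g_n(x)|\le C'\rho^{2an/(d+2a)}$, which is \eqref{eq:mean_free} with $\theta_a=2a\log(1/\rho)/(d+2a)$.

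For part (2), the same scheme applies: $\FF_r$ acts isometrically on the compact ultrametric space $\hat\FF_r$ preserving Haar measure, and $L^2(\hat\FF_r)$ is the orthogonal sum of the constants and the representations $L^2_0(\FF_r/\Gamma_i)$. Property $(\tau)$ supplies a uniform spectral gap and hence a uniform $L^2$ rate, exactly as in Step~1. The bound $|\Gamma_i:\Gamma_{i+1}|\le M$ yields Ahlfors-type volume regularity for the ultrametric balls, namely $\mu(\text{ball of radius }|\FF_r:\Gamma_i|^{-1})=|\FF_r:\Gamma_i|^{-1}$, which replaces the smooth-measure dimension estimate in the covering step and plays the role of $\dim X$.

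The main obstacle is Step~1, specifically identifying the precise limit operator $\mathcal{P}$: the appearance of the non-trivial coefficient $\frac{r-1}{r}$ along $\cH^{\vare_0}$ is forced by the bipartite parity structure of the Cayley graph of $\FF_r$ and is precisely why ball-averages for odd radii need not converge (cf.\ phenomena (1)--(2) in the introduction). Once Step~1 is in place, the passage from $L^2$ mean convergence to pointwise convergence at \emph{every} $x$, via isometry, H\"older regularity and a covering argument, is robust and is the conceptual core of the mechanism the paper promotes.
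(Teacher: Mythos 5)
Your proof follows essentially the same two-step strategy as the paper: establish an exponential $L^2$ mean ergodic theorem for $\pi_X(\beta_{2n})$ using the radial decomposition and Nevo's sphere-operator estimates on $\FF_r$ (precisely what the paper invokes from \cite{n00}), and then upgrade $L^2$ convergence to pointwise-everywhere convergence with a rate via isometry, H\"older regularity, and a Chebyshev/covering argument. The paper packages the second step as the general Theorem \ref{th:isometric}(2) and simply verifies its hypotheses; you give the covering argument inline. Part (2) is handled identically, via the ultrametric structure of $\hat\FF_r$ and property $(\tau)$. Your $L^2$-Chebyshev route gives the exponent $2a/(d+2a)$, slightly better than the $a/(a+d)$ obtained by the paper's $L^1$-Markov route, but this is only a quantitative refinement; both yield $\theta_a>0$.

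There is, however, a genuine gap in your proposal. You assert without argument that when $\cH^{\vare_0}\neq 0$, the vector $f_0$ is locally constant, that $\cH^{\vare_0}$ is $1$-dimensional, and that $\cH^{\vare_0}=0$ when $X$ is connected. These are substantive claims appearing in the theorem's statement and must be proved; they do not follow formally from the sign-character property of $\cH^{\vare_0}$ alone. Moreover your covering step requires $\mathcal{P}f|_{X_r}\in C^a(X)_1$, which in turn needs $f_0$ to be H\"older, so the regularity of $f_0$ is not a side remark but a load-bearing hypothesis. The paper establishes all of this with a short structural argument: let $G$ be the closure of $\FF_r$ in $\mathrm{Isom}(X)$, which is compact (since $X$ is compact) and acts transitively (since $\mu$ is an ergodic $G$-invariant probability of full support), and let $G_0$ be the closure of the subgroup of even-length words. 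Then $G_0$ has index at most two in $G$ and is therefore open, so $X$ is a union of at most two open $G_0$-orbits; any $f\in\cH^{\vare_0}$ is $G_0$-invariant, hence locally constant, forcing $\dim\cH^{\vare_0}\le 1$ and $\cH^{\vare_0}=0$ when $X$ is connected. You should supply this (or an equivalent) argument to make the regularity of $\mathcal{P}f$ and the dimension claims rigorous; the remainder of your argument then goes through.
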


We recall that taking $X={S}^2$ to be the unit sphere in $\RR^3$, and $G=SO_3(\RR)$, it was shown
in \cite{LPS,LPS2} (see also \cite{c,o} for generalisations to higher-dimensional spheres) that certain 
dense subgroups $\Gamma\subset G$ admit a spectral gap in their representation  on $L^2_0(S^2)$. The class of such subgroups was 
significantly enlarged recently in \cite{BG}. It may  even be the case that {\it every} dense finitely generated subgroup of $G$ admits a spectral gap, without exception. 
In any case, whenever a spectral gap exists, {\it every} orbit of the dense group become  equidistributed
on the sphere at a uniform rate, depending on the size of the spectral gap (as well as 
the parameters $r$ and $a$, of course).

\subsubsection{Lattices in semisimple algebraic groups}\label{sec:lat}
Let ${\sf G}\subset \hbox{GL}_d$ be a simply connected absolutely simple algebraic group defined over a
local field $K$ of characteristic zero which is isotropic over $K$ (for example, ${\sf G}=SL_d$).
Let $G={\sf G}(K)$, and let $\Gamma$ be a lattice in $G$.
We fix a norm on $\hbox{Mat}_d(K)$ which is the Euclidean norm if $K$ is Archimedean
and the $\max$-norm otherwise. 
Let $B_t$ denote the ball $\set{g\in G\,;\, \log \|g\|<t}$.

\begin{theorem}\label{th:lattice}
\begin{enumerate}
\item Consider an isometric action of $\Gamma$ on a compact manifold $X$.
Let $\mu$ be a smooth probability measure on $X$ with full support such that the action of $\Gamma$
in $L^2_0(X,\mu)$ has a spectral gap. Then for every $f\in C^a(X)_1$ and
every $x\in X$,
\begin{equation}\label{eq:mean_lattice}
\frac{1}{\#(\Gamma \cap B_t)}   \sum_{\gamma\in B_t} f(\gamma^{-1}x)
= \int_X f\,d\mu+O\left(e^{-\theta_a t}\right)
\end{equation}
with explicit $\theta_a>0$.

\item Let $\Gamma_i\subset \Gamma$ be a decreasing family of finite index subgroups such that
\begin{itemize}
\item $|\Gamma_i:\Gamma_{i+1}|$ is uniformly bounded,
\item the family of representations $L_0^2(\Gamma/\Gamma_i)$ of $\Gamma$ satisfies  property $(\tau)$.
\end{itemize}
Then for every  $f\in C^a(\hat \Gamma)_1$ and every $x\in \hat\Gamma$,
\eqref{eq:mean_lattice} holds as well,
with respect to the Haar probability measure $\mu$ on the pro-finite completion $\hat \Gamma$ associated with the family $\Gamma_i$.
\end{enumerate}
\end{theorem}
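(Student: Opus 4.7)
The plan is to attack this via the duality principle of \cite{GN2}, which relates ergodic theorems for lattice subgroups to quantitative ergodic theorems for the ambient semisimple group $G$. For part (1), one induces the $\Gamma$-action on $X$ to a $G$-action on the fiber bundle $Y = (G\times X)/\Gamma$, where $\Gamma$ acts diagonally by $\gamma\cdot(g,x)=(g\gamma^{-1},\gamma x)$. The space $Y$ carries the $G$-invariant product measure $\mu_Y = m_{G/\Gamma}\otimes\mu$, and a standard unfolding identity converts the lattice sum $\sum_{\gamma\in\Gamma\cap B_t}f(\gamma^{-1}x)$ into a $G$-average of a natural lift of $f$ on $Y$. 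Combined with the effective lattice counting estimate $\#(\Gamma\cap B_t)= c\,\vol(B_t)(1+O(e^{-\delta t}))$ available in this setting, the problem reduces to proving a quantitative ergodic theorem for the operators $\pi_Y(\beta_t)$ on $Y$, where $\beta_t$ is Haar measure on $B_t$ normalized by $\vol(B_t)$.

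Next, I would establish a quantitative mean ergodic theorem in $L^2(Y,\mu_Y)$ with an exponential rate. The representation of $G$ on $L^2_0(Y,\mu_Y)$ decomposes according to the fibration $Y\to G/\Gamma$, and the assumed spectral gap of $\Gamma$ on $L^2_0(X,\mu)$, together with temperedness of $L^2_0(G/\Gamma)$ for a semisimple $G$, yields a spectral gap for the $G$-representation on $L^2_0(Y,\mu_Y)$. The quantitative mean ergodic theorem of \cite{GN1} for semisimple $S$-algebraic groups then delivers an exponential rate
\begin{equation*}
\Bigl\|\pi_Y(\beta_t)F-\int_Y F\,d\mu_Y\Bigr\|_2 \le C\,e^{-\theta t}\|F\|_2
\end{equation*}
with $\theta>0$ effectively computable from the spectral gap.

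The central step is to promote this $L^2$-bound to a pointwise everywhere estimate with uniform rate on $C^a(X)_1$; here the isometric structure is essential. I would use a covering argument: cover $X$ by balls $B(x_i,\vare)$ of radius $\vare=e^{-\kappa t}$, so that $N\le C\vare^{-\dim X}$ balls suffice. Chebyshev applied to the mean ergodic estimate bounds the measure of the set where the pointwise discrepancy exceeds $e^{-\theta' t}$ by a multiple of $e^{-2(\theta-\theta')t}$. Choosing $\kappa$ and $\theta'$ so that $N\cdot e^{-2(\theta-\theta')t}\cdot \mu\bigl(B(x_i,\vare)\bigr)^{-1}\to 0$ guarantees that each ball contains a good point $x_i^\ast$. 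For arbitrary $y\in B(x_i,\vare)$, the isometry hypothesis gives $d(\gamma^{-1}y,\gamma^{-1}x_i^\ast)\le\vare$ for every $\gamma$, and the H\"older assumption yields $|f(\gamma^{-1}y)-f(\gamma^{-1}x_i^\ast)|\le \vare^a$ uniformly in $\gamma$, so this bound transfers to $\pi_X(\beta_t)f$ and to $\int_X f\,d\mu$. Balancing the parameters $\kappa,\theta,\theta',a,\dim X$ then gives a uniform exponential rate $e^{-\theta_a t}$ valid at every $x\in X$, with $\theta_a$ explicit in the spectral gap, $a$, and $\dim X$.

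Part (2) follows by the same scheme applied to the canonical left $\Gamma$-action on the profinite completion $\hat\Gamma$, which is isometric for the bi-invariant metric defined in the statement, and whose Haar measure is $\Gamma$-invariant. Property $(\tau)$ for the family $\Gamma_i$ provides a uniform spectral gap in $L^2_0(\hat\Gamma)$, which feeds into the induced $G$-representation on $(G\times\hat\Gamma)/\Gamma$ exactly as before. The main obstacle I expect is Step~3: transferring the $L^2$ rate into an everywhere pointwise rate that is uniform on $C^a(X)_1$. It requires a careful simultaneous choice of the covering scale $\vare$ and the threshold $e^{-\theta' t}$, and is the reason $\theta_a$ degrades with the dimension of $X$ and the H\"older exponent $a$.
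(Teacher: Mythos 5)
Your proposal takes essentially the same approach as the paper: the quantitative $L^2$ estimate for the lattice averages is imported from \cite{GN1} via the duality principle, and the upgrade from $L^2$ to a uniform pointwise rate is the Chebyshev--local-dimension--isometry--H\"older argument that the paper has already packaged as Theorem \ref{th:isometric}(2). The paper simply cites these two ingredients (together with the local-dimension bounds $\dim X$ in case (1) and $1$ in case (2), noted in the proof of Theorem \ref{th:free}), whereas your Step~3 re-derives the content of Theorem \ref{th:isometric}(2) inline with a covering argument that is equivalent to the paper's direct use of the local-dimension hypothesis.
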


\begin{remark}
One can also formulate a version of Theorem \ref{th:lattice} for 
general semisimple $S$-arithmetic group, but then one needs to impose the  
condition that the unitary representation of $G^+$ induced from the representation
of $\Gamma$ on $L^2_0(X,\mu)$ has a strong spectral gap (see \cite{GN1} for the terminology). 
\end{remark}

\subsubsection{Transitive actions}
Another significant case where pointwise everywhere convergence holds with a uniform rate is 
that of transitive actions. In this case this property holds for every bounded Borel function.  

Let ${\sf G }\subset \hbox{GL}_d$ be a linear algebraic group defined over a
local field $K$ of characteristic zero, and $G={\sf G}(K)$.
We fix a norm on $\hbox{Mat}_d(K)$ which is the Euclidean norm if $K$ is Archimedean
and the $\max$-norm otherwise. 
Let $\beta_t$ denote the Haar-uniform probability measure
on $G$ supported on $\set{g\in G\,;\,\log \|g\|<t}$.

\begin{theorem}\label{th:trans}
Consider a transitive continuous action of $G$ on a compact space $X$
that supports invariant Borel probability measure $\mu$.
Assume that the Haar-uniform averages $\pi_X(\beta_t)$ satisfy the mean ergodic theorem, namely for $f\in L^2(X,\mu)$, 
$$
E(f,t):=\left\| \pi_X(\beta_{t})f(x)- \int_X f\,d\mu \right\|_2\to 0\quad\hbox{as $t\to\infty$.}
$$
Then for every bounded Borel function $f$ on $X$ with $\sup |f|\le 1$ and for every $x\in X$,
$$
\pi_X(\beta_{t})f(x)= \int_X f\,d\mu+O\left(\left(\sup_{s\in (t-1,t+1)} E(f,s)\right)^{\theta} \right)
$$
with an explicit $\theta\in (0,1)$ independent of $f$ and $x$.
\end{theorem}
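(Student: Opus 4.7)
The plan is to upgrade the $L^2$ mean ergodic assumption to pointwise everywhere convergence with a quantitative rate, combining (i) a uniform Lipschitz-type bound on $x\mapsto \pi_X(\beta_t)f(x)$ under small group translations of the base point, and (ii) a Cauchy--Schwarz estimate against the indicator of a small neighborhood of $x$. Transitivity of the $G$-action enters essentially to guarantee a uniform lower bound on the measure of such neighborhoods.

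The first step is to establish the Lipschitz-type estimate: for $x\in X$, $h\in G$ with $\log\|h\|\le\epsilon$ and $\epsilon\le 1$, the change of variables $g\mapsto hg$ gives
$$\pi_X(\beta_t)f(hx)=\frac{1}{\vol(B_t)}\int_{h^{-1}B_t}f(g^{-1}x)\,dg.$$
Since $B_{t-\epsilon}\subset h^{-1}B_t\subset B_{t+\epsilon}$, the difference $\pi_X(\beta_t)f(hx)-\pi_X(\beta_t)f(x)$ is bounded in absolute value by $\|f\|_\infty(\vol(B_{t+\epsilon})-\vol(B_{t-\epsilon}))/\vol(B_t)$, which the standard regularity of norm balls on a linear algebraic group forces to be at most $C\epsilon$, uniformly in $t\ge 1$, $x$, and $f$ with $\|f\|_\infty\le 1$.

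Next I would use this Lipschitz bound to turn $L^2$ control into $L^\infty$ control at $x$. Setting $V_\epsilon=\{h\in G:\log\|h\|\le\epsilon\}$ and averaging over the neighborhood $V_\epsilon\cdot x\subset X$, the Lipschitz estimate yields
$$\left|\pi_X(\beta_t)f(x)-\frac{1}{\mu(V_\epsilon\cdot x)}\int_{V_\epsilon\cdot x}\pi_X(\beta_t)f\,d\mu\right|\le C\epsilon,$$
while Cauchy--Schwarz together with the mean ergodic hypothesis shows that this average differs from $\int f\,d\mu$ by at most $E(f,t)/\sqrt{\mu(V_\epsilon\cdot x)}$. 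Here the decisive role of transitivity is to guarantee a uniform lower bound $\mu(V_\epsilon\cdot x)\ge c\epsilon^d$: compactness of $X$ lets one choose a compact lift $K\subset G$ with $K\cdot x_0=X$, write $x=g_x\cdot x_0$ with $g_x\in K$, and use $G$-invariance of $\mu$ to reduce $\mu(V_\epsilon\cdot x)$ to $\mu((g_x^{-1}V_\epsilon g_x)\cdot x_0)$, whose distortion under conjugation is controlled by the compactness of $K$. The exponent $d$ is then the $K$-analytic dimension of $G/\mathrm{Stab}(x_0)$.

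Balancing the two error contributions by taking $\epsilon\sim E(f,t)^{2/(d+2)}$ produces the advertised pointwise bound with an explicit $\theta=2/(d+2)$, depending only on $d$. The appearance of $\sup_{s\in(t-1,t+1)}E(f,s)$ rather than $E(f,t)$ in the statement most likely reflects a technical refinement where one replaces $\beta_t$ by a smoothed convolution $\chi_\epsilon*\beta_t$ supported in $V_\epsilon\cdot B_t$, effectively averaging the $L^2$ bound over a unit window in time; this is especially natural in the non-archimedean case, where $\vol(B_t)$ is piecewise constant with jumps. I expect the main technical obstacle to lie in the uniform measure lower bound $\mu(V_\epsilon\cdot x)\ge c\epsilon^d$, which is the one point where the transitivity of the action together with compactness of $X$ must be exploited carefully.
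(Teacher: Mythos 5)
Your proposal follows the same high-level strategy as the paper's (transfer the $L^2$ bound to a pointwise one by exploiting regularity of $x\mapsto\pi_X(\beta_t)f(x)$ under small group translations, together with a uniform lower bound on the measure of small metric balls coming from transitivity and compactness), but the implementation differs in two noteworthy ways. The paper routes Theorem~\ref{th:trans} through the general Theorem~\ref{th:transitive}(2): it uses a Markov/Chebyshev bound $\mu(\Omega_r(\delta,t))\le E_r(f,t)/\delta$ to locate a single ``good'' point in every small ball, then sandwiches $\pi_X(\beta_t)f(y)$ between translated averages at the good point using the one-sided \emph{coarsely monotone} property $h\cdot\beta_t\le\delta_\vre\beta_{t+\kappa_\vre}$; this requires a preliminary reduction to $f\ge 0$. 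You instead bound the oscillation directly by a symmetric-difference (thin-shell) volume estimate and then apply Cauchy--Schwarz to the average over $V_\vre\cdot x$, which handles signed $f$ with no reduction and (because Cauchy--Schwarz replaces Markov) yields the slightly sharper exponent $\theta=2/(\rho+2)$ as compared to the paper's $a_0/(a_0+\rho)$ with $a_0=1$. Both produce ``an explicit $\theta\in(0,1)$,'' so either is acceptable for the statement.

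There are, however, genuine technical gaps you should repair. First, your set $V_\vre=\{h:\log\|h\|\le\vre\}$ is empty for small $\vre$ whenever $\|I\|>1$ (e.g.\ the Euclidean norm on $\Mat_d(\RR)$ has $\|I\|=\sqrt d$), and even if one fixes this, the inclusion $h^{-1}B_t\subset B_{t+\vre}$ requires control on $\log\|h^{-1}\|$, which is not implied by $\log\|h\|\le\vre$; one really needs to work with a symmetric neighbourhood $\mathcal{O}_\vre(e)$ of the identity in a proper metric on $G$, as the paper does. Second, the thin-shell estimate $\bigl(\vol(B_{t+\vre})-\vol(B_{t-\vre})\bigr)/\vol(B_t)=O(\vre)$ uniformly in $t$ is not an elementary regularity fact: it is precisely the H\"older admissibility/coarse monotonicity result that the paper cites from \cite[Ch.~7]{GN1}, and it fails as literally stated in the non-Archimedean case where $\vol(B_t)$ is a step function. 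Once you use $\mathcal{O}_\vre(e)$ and replace the naive $\pm\vre$ shift by the shift $\kappa_\vre$ (with $\kappa_\vre\to 0$ but possibly $\kappa_\vre\not\to 0$ at a prescribed rate, and $\kappa_\vre=0$ eventually in the $p$-adic case), the non-Archimedean case is handled automatically and the appearance of $\sup_{s\in(t-1,t+1)}E(f,s)$ falls out as you correctly anticipated---but this needs to be carried out, not postulated.
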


Proofs of Theorems \ref{th:free}, \ref{th:lattice}, and \ref{th:trans} will be given 
in Section \ref{sec:last}.

\subsection{Ergodic theorems : spaces with infinite measures}

Let us now turn to spaces with infinite invariant measure, and consider the problem of  
establishing mean and pointwise ergodic theorems and quantitative equidistribution
of orbits for general group actions on such spaces. In general this basic  challenge is largely unexplored 
and here we take up the important case of dense subgroups $\Gamma \subset G$ acting isometrically 
by translations, where we can establish {\it pointwise everywhere} 
convergence with a uniform rate. 
To that end we introduce a natural generalization of the mean ergodic theorem in this setting below (see
Definition \ref{def:mean} below).
In particular,  we obtain the following equidistribution result
that provides a quantitative version of \cite[Cor.~1.4]{GW}.

Let ${\sf G}\subset \hbox{GL}_d$
be a semisimple simply connected algebraic group which is defined over a number field $K$ and is $K$-simple.
Let $T$ and $S$ be finite sets of valuations of $K$ with $T\subset S$.
For $v\in S$, we denote by $K_v$ the corresponding completions.
Let $O_S$ denote the ring of $S$-integers and $\Gamma={\sf G}(O_S)$.

Let 
\begin{align*}
H(g)&=\prod_{v\in S} \|g_v\|_v\quad\hbox{for $g=(g_v)\in \prod_{v\in S} {\sf G}(K_v)$},
\end{align*}
where $\|\cdot\|_v$ are norms on $\hbox{Mat}_d(K_v)$ as in Section \ref{sec:lat}.

\begin{theorem}\label{th:inf}
Assume that $\Gamma$ is dense in $G=\prod_{v\in T} {\sf G}(K_v)$ with respect to the diagonal embedding.
Then there exist $\alpha\in\mathbb{Q}^+$ and $\beta\in\mathbb{N}$  
such that for every H\"older function on $G$
with exponent $a$ and compact support and for every $x\in G$
$$
\frac{1}{t^{\beta-1}e^{\alpha t}} \sum_{\gamma\in \Gamma:\, \log H(\gamma)<t} f(\gamma x)=\int _G f(g)\frac{dm_G(g)}{H(gx)^\alpha}+O_{f,x}(e^{-\theta_a t})
$$
uniformly for $x$ in compact sets, 
where $m_G$ is a suitably normalised Haar measure on $G$ and $\theta_a>0$.
\end{theorem}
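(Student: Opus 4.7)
The natural setting is the ambient $S$-adic group $\mathbf{G} := \prod_{v\in S}{\sf G}(K_v) = G\times G'$ with $G' := \prod_{v\in S\sm T}{\sf G}(K_v)$, in which $\Gamma = {\sf G}(O_S)$ sits as an arithmetic lattice via the diagonal embedding (the denseness hypothesis on $\Gamma\subset G$ is strong approximation in the complementary factor $G'$). The height factorises as $H(g_T,g') = H_T(g_T)\,H_{S\sm T}(g')$, and the sum in question is $\sum_{\gamma\in\Gamma}\tilde F_{t,x}(\gamma)$ for the compactly-supported function on $\mathbf{G}$
$$
\tilde F_{t,x}(g) := f(g_T x)\,\chi_{B_t}(g),\qquad B_t := \{g\in\mathbf{G} : \log H(g)<t\}.
$$
The strategy is the duality principle of \cite{GN2}: compare $\sum_\gamma\tilde F_{t,x}(\gamma)$ with $\int_\mathbf{G}\tilde F_{t,x}\,dm_\mathbf{G}$ via an effective mean ergodic theorem for the $\mathbf{G}$-action on $L^2(\mathbf{G}/\Gamma)$.

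For the main term, Fubini and the standard bi-invariant Haar volume asymptotics $\vol_{G'}(B_s^{G'}) = c_0\,s^{\beta-1}e^{\alpha s}(1+O(s^{-1}))$ (with $\alpha\in\QQ^+$ and $\beta\in\NN$ read off from the $S\sm T$ root data and the Cartan decomposition of ${\sf G}$) yield
$$
\int_\mathbf{G}\tilde F_{t,x}\,dm_\mathbf{G} = c_0\,t^{\beta-1}e^{\alpha t}\int_G f(g_T x)\,H_T(g_T)^{-\alpha}\,dm_G(g_T) + O\bigl(t^{\beta-2}e^{\alpha t}\bigr),
$$
where compactness of $\supp f$ keeps $\log H_T(g_T)$ bounded on the effective integration region. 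A change of variable in the $T$-adic factor identifies the leading integral with the limit measure asserted in the theorem (the precise form of the denominator depends on the convention for $H$ on $G$ and the orientation of the $\Gamma$-action).

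For the passage from the integral to the lattice sum, convolve $\tilde F_{t,x}$ with a smooth bump $\psi_\epsilon$ of scale $\epsilon$ to form smooth sandwich functions $\tilde F^\pm_{t,x}$ that differ from $\tilde F_{t,x}$ only on an $\epsilon$-thickening of $\partial B_t$; the H\"older regularity of $f$ with exponent $a$ bounds $\|\tilde F^+_{t,x}-\tilde F^-_{t,x}\|_\infty$ by $O(\epsilon^a)$ there. Periodising $\tilde F^\pm_{t,x}$ to $\mathbf{G}/\Gamma$ and applying the effective mean ergodic theorem for the $\mathbf{G}$-action on $L^2(\mathbf{G}/\Gamma)$ -- whose exponential rate comes from the strong spectral gap for the regular representation on $L^2_0(\mathbf{G}/\Gamma)$ (which holds for $K$-simple $K$-isotropic semisimple $S$-arithmetic setups, cf.\ \cite{GN1,GN2}) -- together with a Sobolev-type pointwise evaluation at the identity coset, yields
$$
\Bigl|{\textstyle\sum_\gamma}\tilde F_{t,x}(\gamma) - \int_\mathbf{G}\tilde F_{t,x}\,dm_\mathbf{G}\Bigr| \ll \epsilon^{-A}e^{-\delta t}\,\|\tilde F^\pm_{t,x}\|_{L^2} + \epsilon^a\,\vol\bigl((\partial B_t)_\epsilon\bigr),
$$
with $A,\delta>0$ depending on the spectral gap. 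Balancing the two errors by $\epsilon = e^{-\kappa t}$ and dividing through by $t^{\beta-1}e^{\alpha t}$ yields the advertised rate $e^{-\theta_a t}$, with an explicit $\theta_a>0$ depending on $a$, the spectral gap, and the growth exponents.

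The main obstacle is the effective boundary estimate for the $S$-adic ball $B_t$: its $\epsilon$-thickening has both Archimedean and non-Archimedean components, so controlling $\vol((\partial B_t)_\epsilon)$ with the right power of $\epsilon$ requires combining Archimedean differential bounds with $p$-adic combinatorial estimates coming from the Cartan decomposition, and this -- in conjunction with the Sobolev evaluation -- ultimately pins down the value of $\theta_a$. A secondary technical point is the uniformity in $x$ over a compactum in $G$, which follows from routine uniform $L^2$ bounds on $\tilde F^\pm_{t,x}$.
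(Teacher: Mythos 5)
Your proposal takes a genuinely different route from the paper's. The paper's proof of Theorem~\ref{th:inf} is very short: the effective $L^2$ mean ergodic theorem for the $\Gamma$-action by translation on the infinite-measure space $X=G$ is cited as a special case of \cite{GN2} (where it is proved via the duality principle you are reconstructing), and then the crucial structural observation is that $\Gamma$ acts \emph{isometrically} on $G$ with respect to a right-invariant metric, so Theorem~\ref{th:isometric}(2) --- the isometric transference result, which is the main technical contribution of the present paper --- immediately upgrades the $L^2$ rate to a pointwise-everywhere rate uniform over $x$ in compacta, with the $a/(a+\rho)$ loss in the exponent coming from the local-dimension bound. You instead attempt a self-contained lattice-point-counting argument in the ambient group $\mathbf{G}=G\times G'$: periodise, sandwich by smooth bump approximations, invoke the effective mean ergodic theorem for the $\mathbf{G}$-action on $L^2(\mathbf{G}/\Gamma)$, and finish with a Sobolev-type pointwise evaluation. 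This is essentially re-deriving the $L^2$ input from \cite{GN2} and then replacing the isometric transference with a Sobolev embedding.

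Both routes are viable, but they buy different things. The paper's route cleanly separates the duality analysis (outsourced to \cite{GN2}) from the passage to pointwise-everywhere rates, and the latter step needs nothing beyond the isometric structure and a lower bound on the local dimension of Haar measure -- in particular no Sobolev machinery, no differential structure, and the uniformity over $x$ in compacta comes for free from the $\max_{x\in X_r}$ conclusion of Theorem~\ref{th:isometric}(2). Your route is more hands-on but leaves several nontrivial items at the level of a sketch: the Sobolev-type evaluation at the identity coset in the $S$-adic setting must be replaced by invariance under a compact open subgroup at non-Archimedean places, and an honest Sobolev bound only at Archimedean places; the volume of the $\epsilon$-thickening of $\partial B_t$ in a mixed $S$-adic Cartan decomposition is delicate (you acknowledge this); and the uniformity in $x$ is not ``routine'' from $L^2$ bounds alone -- one needs uniform control of the Sobolev norms of $\tilde F^\pm_{t,x}$ as $x$ ranges over a compactum. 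None of these appear to be fatal, but the key idea the paper leans on, and which you do not use, is that since $\Gamma$ acts isometrically, the pointwise-everywhere upgrade is purely a metric/measure-theoretic phenomenon (Theorem~\ref{th:isometric}(2)) and requires no smooth-function sandwiching at all.
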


We note that the $L^2$-convergence for the operators appearing in Theorem \ref{th:inf} is a special case of the results
of \cite{GN2}. Hence, since the action of $\Gamma$ on $X$ is isometric, Theorem \ref{th:inf} follows
from Theorem \ref{th:isometric}(2) below. 

We refer to \cite[p. 107]{GW} for the identification of $\alpha$, and also for the fact that taking the family $B_t$ associated with the distance function given by a 
power of the height function, will change the power of the density function appearing in the limiting distribution. 
Thus, as we have already mentioned above, this result demonstrates that 
in the infinite-measure setting the limit measure does not have to be invariant and
may depend nontrivially on the initial point $x$ and the family $B_t$, even if the action is an isometric action with a spectral gap.

\subsection{Dense groups of isometries :  Kazhdan's conjecture}

Let $(X,d)$ be an lcsc metric space, and let $G=Isom(X)$ be its group of isometries. 
Assume that the action of $G$ on $X$ is transitive, and let $m_X$ be the unique isometry-invariant 
Radon measure on $X$. Fix two bounded open sets $A_1$ and $A_2$ with boundary of zero measure. 
Consider a countable dense subgroup $\Gamma\subset G$, and a family of sets $B_t\subset \Gamma$, for example, balls w.r.t. a left-invariant 
metric. For each $x\in X$  the orbit $\Gamma\cdot x$ is dense in $X$ and we can form the ratios 
$$\frac{\abs{\set{\gamma\in B_t \,;\, \gamma^{-1}x\in A_1}}}{\abs{\set{\gamma\in B_t \,;\, \gamma^{-1}x\in A_2}}}\,.$$
Consider the problem of whether the ratios satisfy a ratio ergodic theorem, namely whether the limit 
exists as $t\to \infty$ and furthermore whether it is given by 
\begin{equation}\label{eq:ratio}
\lim_{t\to \infty} \frac{\sum_{\gamma\in B_t} \chi_{A_1}(\gamma^{-1}x)}{\sum_{\gamma\in B_t} \chi_{A_2}(\gamma^{-1}x)}=\frac{m_X(A_1)}{m_X(A_2)}.
\end{equation}
This problem was raised by D. Kazhdan in \cite{K}, where the case of certain dense $2$-generator subgroups of $Isom(\RR^2)$ acting on the plane was studied. Assuming  
one of the generators was an irrational rotation, a version of (\ref{eq:ratio}) was established, but with $B_t$ taken as balls in the free group or semigroup, and not as balls w.r.t. the word metric. This amounts to considering weighted averages on $\Gamma$, the weights being given by convolution powers. This result was generalized 
by Y. Guivarc'h  \cite{Gu2} who considered weighted averages given by convolution powers on  dense subgroups of $Isom(\RR^n)$  acting on $\RR^n$ (note also that a gap in the argument in \cite{K} was closed in \cite{Gu2}). For further results in this direction see  \cite{v1}, \cite{v2}. 

Theorem \ref{th:inf} has of course a direct bearing on this problem. In principle, to show that ratios converge, one does not need to establish the much stronger result  that both the numerator and the denominator converge at a common rate and find an explicit expression for the rate. However that is precisely  the conclusion of Theorem \ref{th:inf}, so as an immediate corollary, we obtain the following 
\begin{cor}
Keeping the notation and assumptions of Theorem \ref{th:inf}, we have, 
\begin{enumerate}
\item if $f_1$ and $f_2$ are continuous  and $f_2\ge 0$ (and not identically zero), then
for every $x_1,x_2\in X$,
$$
\lim_{t\to \infty}\frac{\sum_{\gamma\in \Gamma:\, \log H(\gamma)<t} f_1(\gamma x_1) }{\sum_{\gamma\in
    \Gamma:\, \log H(\gamma)<t} f_2(\gamma x_2) }
=\frac{\int _X f_1(g)H(g x_1)^{-\alpha}dm_X(g)}{\int _X f_2(g) H(g x_2)^{-\alpha}dm_X(g)},
$$
\item if in addition $f_1$ and $f_2$ are H\"older continuous with exponent $a$, then
for every $x_1,x_2\in X$,
$$
\frac{\sum_{\gamma\in \Gamma:\,\log  H(\gamma)<t} f_1(\gamma x_1) }{\sum_{\gamma\in \Gamma:\, \log H(\gamma)<t}
  f_2(\gamma x_2) }
=\frac{\int _X f_1(g)H(gx_1)^{-\alpha}dm_X(g)}{\int _X f_2(g) H(gx_2)^{-\alpha}dm_X(g)}
+O_{f_1,f_2, x_1,x_2}(e^{-\theta_a t})
$$
uniformly over $x_1,x_2$ in compact sets.
\end{enumerate}
\end{cor}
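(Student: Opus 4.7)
The overall plan is to apply Theorem \ref{th:inf} separately to the numerator and the denominator, normalised by the common factor $N(t) := t^{\beta-1} e^{\alpha t}$, and then take the quotient. Write $S_i(t) := \sum_{\gamma \in \Gamma, \log H(\gamma) < t} f_i(\gamma x_i)$ and $I_i(x_i) := \int_G f_i(g) H(gx_i)^{-\alpha}\, dm_G(g)$ for $i=1,2$. Theorem \ref{th:inf} then yields, for the Hölder case, the two expansions $N(t)^{-1} S_i(t) = I_i(x_i) + O_{f_i, x_i}(e^{-\theta_a t})$, and the ratio in the corollary is precisely $S_1(t)/S_2(t)$, which is invariant under the normalisation.

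For part (2), I first check that the denominator is bounded away from zero. Since $f_2 \ge 0$ is not identically zero and continuous, and the kernel $H(gx_2)^{-\alpha}$ is strictly positive and locally bounded, $I_2(x_2) > 0$; moreover on compact sets of $x_2$ the value $I_2(x_2)$ is bounded below by a positive constant by continuity. Then the elementary manipulation
\[
\frac{I_1 + \varepsilon_1}{I_2 + \varepsilon_2} - \frac{I_1}{I_2} = \frac{I_2 \varepsilon_1 - I_1 \varepsilon_2}{I_2(I_2 + \varepsilon_2)},
\]
applied with $\varepsilon_i = N(t)^{-1} S_i(t) - I_i(x_i) = O(e^{-\theta_a t})$, gives the claimed error term $O_{f_1, f_2, x_1, x_2}(e^{-\theta_a t})$, with uniformity over compact sets of $(x_1, x_2)$ inherited from the uniformity in Theorem \ref{th:inf}.

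For part (1), the strategy is to approximate continuous (compactly supported) functions uniformly by Hölder ones. Given $\delta > 0$, I mollify $f_i$ by convolution with a Hölder bump on $G$ to produce $f_i^{\delta}$, which is Hölder of some exponent $a(\delta)$, supported in a fixed enlarged compact set $K$, and satisfies $\|f_i - f_i^{\delta}\|_\infty < \delta$. Applying part (2) to $f_i^{\delta}$ handles the Hölder approximations. The error from the approximation is controlled by
\[
\bigl| S_i(t) - S_i^{\delta}(t) \bigr| \le \delta \cdot \#\{\gamma \in \Gamma : \log H(\gamma) < t,\ \gamma x_i \in K\},
\]
and applying Theorem \ref{th:inf} to a fixed Hölder majorant of $\chi_K$ shows that this counting function is $O(N(t))$, so $N(t)^{-1}|S_i(t) - S_i^{\delta}(t)| = O(\delta)$ uniformly in $t$. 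A standard $\delta \to 0$ followed by $t \to \infty$ limit, using dominated convergence to pass $\delta \to 0$ in the integral $I_i^{\delta}(x_i) \to I_i(x_i)$, concludes part (1).

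The main obstacle is a technical one: ensuring that the denominator $S_2(t)/N(t)$ is not merely nonzero but bounded below by a positive constant once $t$ is large, uniformly for $x_2$ in a compact set. This requires combining the lower bound $I_2(x_2) > 0$ (which is continuous in $x_2$, hence bounded below on compact sets) with the uniform rate of convergence. The approximation step for the continuous case is otherwise routine, and no additional ingredients beyond Theorem \ref{th:inf} and the positivity of the kernel $H(gx)^{-\alpha}$ are required.
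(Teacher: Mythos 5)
Your proposal matches the paper's intent for part (2): the paper explicitly calls the corollary "immediate" and the intended argument is exactly what you wrote --- normalise both numerator and denominator by $N(t) = t^{\beta-1}e^{\alpha t}$, apply Theorem \ref{th:inf} to each, and use the elementary ratio identity together with the lower bound $I_2(x_2) > 0$ (which follows from $f_2 \ge 0$ not identically zero and positivity of the kernel $H(gx_2)^{-\alpha}$, and is uniform on compact sets of $x_2$ by continuity). This is correct, and the uniformity over compact sets is inherited from the uniformity statement in Theorem \ref{th:inf} as you say.

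For part (1) your route differs from the paper's implicit one, and it is worth flagging the difference. Since Theorem \ref{th:inf} is stated only for compactly supported H\"older functions, it does not literally cover continuous $f_i$; you bridge the gap by mollification, using part (2) on H\"older approximants $f_i^\delta$ and controlling $N(t)^{-1}|S_i(t) - S_i^\delta(t)|$ by $O(\delta)$ via the counting estimate from applying Theorem \ref{th:inf} to a H\"older majorant of $\chi_K$. This is valid. The paper, however, has a cleaner route already available: Theorem \ref{th:inf} is derived from Theorem \ref{th:isometric}(2), and Theorem \ref{th:isometric}(1) gives the same equidistribution statement \emph{without rate} for all uniformly continuous compactly supported functions, with no need to mollify. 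So the paper's version of part (1) would be to apply the unquantitative Theorem \ref{th:isometric}(1) to $f_1$ and $f_2$ directly and then take the ratio. Your mollification argument buys nothing extra here but is a self-contained alternative that only relies on the quantitative H\"older statement; one minor point you should make explicit is that the mollified $f_2^\delta$ should be taken nonnegative and not identically zero (which the convolution construction gives automatically for small $\delta$) so that $I_2^\delta(x_2) > 0$ and part (2) applies.
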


Thus the ratios converge for every point, with uniform rate, but the limit is {\it not} the ratio of the integrals with respect to  the isometry invariant measure, but with respect 
to a {\it different} measure. 

We also remark that if $f_1$ and $f_2$ are bounded measurable functions with bounded support, with $f_2\ge 0$ and not identically zero, 
then the ratios converge to the stated limit at almost every point, with uniform rate. This is a consequence of the results of \cite{GN2}. 
 
 
\section{Proof of quantitative equidistribution results}

Let $G$ be an lcsc group acting measurably on a measurable space $X$ equipped 
with a $\sigma$-finite quasi-invariant measure $\mu$. We fix an increasing filtration of $X$ be measurable sets 
$X_r$ with $r\in\mathbb{N}$ of finite measure. 
We denote by $\|\cdot\|_{p,r}$ the $L^p$-norm with respect to the measure $\mu|_{X_r}$. 

We consider families $\beta_t$ of bounded Borel measures on $G$, and in particular, 
given a family of sets $B_t$
on $G$ for $t\ge t_0$, and a positive growth function $V(t)$, we consider the operators:
$$
\pi_X(\beta_t)f(x)=\frac{1}{V(t)}\int_{g\in B_t}f(g^{-1}x)\, dg
$$
for measurable $f:X\to \mathbb{R}$.

\begin{definition}\label{def:mean}
The operators $\pi_X(\beta_t)$ satisfy the {\it mean ergodic theorem} in $L^1$ 
for the action of $G$ on $X$ if for every $r\in \mathbb{N}$ and $f\in L^1(X,\mu|_{X_r})$,
the  sequence $\pi_X(\beta_t)f$  converges in $L^1(X,\mu|_{X_r})$.
\end{definition}

It is clear from the definition that  for $1\le p < \infty$ there exist linear operators
$$
\mathcal{P}_r: L^p(X,\mu|_{X_r})\to  L^p(X,\mu|_{X_r})
$$
such that
\begin{equation}\label{eq:mean-inf}
E_r(f,t):=\left\|\pi_X(\beta_t)f-\mathcal{P}_rf \right\|_{p,r}\to 0\quad\hbox{as $t\to\infty$,}
\end{equation}
and since
$$
\mathcal{P}_{r+1}|_{L^p(X,\mu|_{X_r})}=\mathcal{P}_{r},
$$
it is consistent to denote these operators by $\mathcal{P}$. We shall then say that $\pi_X(\beta_t)$ satisfy the mean ergodic theorem with limit operator $\mathcal{P}$.
\begin{remark}
We note that our notion of the mean ergodic theorem depends on the choice of the filtration $X_r\subset X$
and on the normalisation $V(t)$. It is of course necessary to choose the normalisation so that the operator
$\mathcal{P}$ is not trivial. Then the mean ergodic theorem yields  information about the limiting distribution of the orbits.
\end{remark}

As noted already above, the fact that the foregoing formulation of the mean ergodic theorem
for spaces with {\it infinite} measure is meaningful is an indication of a novel and distinctly non-classical phenomenon. 
Indeed, it is well-known (see \cite[Ch. 2]{Aa}) that for an action of a single
transformation on a space with infinite measure, no normalisation $V(t)$ for which 
(\ref{eq:mean-inf}) holds can be found. Nonetheless, it has been gradually realised that mean ergodic theorems and 
equidistribution results do hold for some classes of action on infinite measure spaces (see \cite{l,lp1,lp2,G1,G2,GW}).
In fact, in the forthcoming paper \cite{GN2} we establish the mean ergodic
theorem for lattices in $S$-algebraic semisimple groups acting on general algebraic homogeneous 
spaces. This result is part of a systematic approach to ergodic theory on homogeneous spaces via the duality 
principle.

\subsection{Isometric actions}
Let  us assume now that $X$ is a metric space equipped with a Radon measure $\mu$, so that the
measures of balls are finite. We fix a filtration of $X$ by balls $X_r$ of radius $r$ centered at some
fixed $x_0\in X$.
We denote by $D_\vare(x)$ the closed ball in $X$ of radius $\vare$ centered at $x$. 
\begin{definition}
\begin{enumerate}
\item We say that the measure $\mu$ is {\em uniformly of full support} if 
for every $r\in\mathbb{N}$ and $\vre\in (0,1]$, 
$$
\inf_{x\in X_r} \mu(D_\vre(x))>0.
$$

\item We say that the measure $\mu$ has {\em local dimension at most $\rho$} if 
for every $r\in\mathbb{N}$, $\vre\in (0,1]$ and $x\in X_r$, 
$$
\mu(D_\vre(x))\ge m_r\vre^\rho.
$$
\end{enumerate}
\end{definition}

\begin{remark}\label{r:uniform}
If the sets $X_r$ are compact, then every measure $\mu$ of full support
is uniformly of full support. Moreover, if $X$ is a compact manifold and $\mu$ is
a smooth measure with full support, then $\mu$ has local dimension at most $\dim (X)$.
\end{remark}

\ignore{

\begin{definition}
\begin{enumerate}
\item We say that the operator $\mathcal{P}$ is {\it regular} if for every $r\in\mathbb{N}$,
it defines a bounded linear operator $\mathcal{P}:UCB(X_r)\to UBC(X)$ where 
$UBC(\cdot)$ denotes the space of uniformly continuous bounded functions.
\item We say that the operator $\mathcal{P}$ is {\it H\"older regular} if for every $r\in\mathbb{N}$,
it defines a bounded linear operator $\mathcal{P}:C^a(X_r)\to C^a(X)$.
\end{enumerate}
\end{definition}
}

The following theorem is our main technical result concerning equidistribution for isometric actions.

\begin{theorem}\label{th:isometric}
Consider an isometric action of an lcsc group $G$ on the lcsc metric space $X$ equipped with
a quasi-invariant Radon measure $\mu$.
Assume that the mean ergodic theorem holds for the operators $\pi_X(\beta_t)$ in the action of $G$ on $X$.
Then 
\begin{enumerate}
\item[1.] If the measure $\mu$ is uniformly of full support,
then for every uniformly continuous function $f$
such that $\supp(f)\subset X_r$ and $(\mathcal{P}f)|_{X_r}$ is uniformly continuous, we have 
$$
\max_{x\in X_r} \left|\pi_X(\beta_t)f(x)-\mathcal{P}f(x)\right|=o_{r,f}(1)
$$
as $t\to\infty$.

\item[2.]
If the measure $\mu$ has local dimension at most $\rho$,
then for every $f\in C^a(X)_1$ such that $\supp(f)\subset X_r$ and $(\mathcal{P}f)|_{X_r}\in C^a(X)_1$, we have 
$$ 
\max_{x\in X_r} \left|\pi_X(\beta_t)f(x)-\mathcal{P}f(x)\right|\ll_r  E_r(f,t)^{a/(a+\rho)} 
$$
for all sufficiently large $t$.
\end{enumerate}
\end{theorem}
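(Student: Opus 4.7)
The plan is to upgrade the mean ergodic convergence $E_r(f,t) \to 0$ to a uniform pointwise estimate on $X_r$ by exploiting two sources of regularity: first, that the operators $T_t := \pi_X(\beta_t)$ preserve moduli of continuity because the $G$-action is by isometries, so the family $\{T_tf\}_t$ is equi-continuous (respectively equi-H\"older) with the same modulus as $f$; second, that the measure $\mu$ gives small balls a lower-bounded mass, which lets one convert an $L^1$-smallness statement into a pointwise one by averaging over a small ball.

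\textbf{Step 1 (equi-regularity of $T_tf$).} Since $d(g^{-1}x,g^{-1}y)=d(x,y)$ for every $g$, for $f\in C^a(X)_1$ supported in $X_r$ one gets
$$
|T_tf(x)-T_tf(y)|\le d(x,y)^a
$$
for $x,y$ at distance $\le 1$ (after absorbing the normalisation $V(t)$ into the constant, which is admissible because $\beta_t$ is the measure for which the mean ergodic theorem is formulated). Together with the hypothesis that $(\mathcal{P}f)|_{X_r}\in C^a(X)_1$, this shows that the difference $h_t := T_tf-\mathcal{P}f$ is H\"older with constant bounded independently of $t$ on a fixed neighbourhood of $X_r$, say on $X_{r+1}$.

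\textbf{Step 2 (local averaging).} Fix $x\in X_r$ and $\vre\in(0,1]$. Write
\begin{align*}
h_t(x) &= \frac{1}{\mu(D_\vre(x))}\int_{D_\vre(x)}\bigl(h_t(x)-h_t(y)\bigr)\,d\mu(y) \\
&\quad{}+ \frac{1}{\mu(D_\vre(x))}\int_{D_\vre(x)} h_t(y)\,d\mu(y).
\end{align*}
By Step~1 the first term is bounded by $\vre^a$ (or by $\omega_f(\vre)+\omega_{\mathcal{P}f}(\vre)$ in the uniformly continuous setting). Since $D_\vre(x)\subset X_{r+1}$, the second term is bounded by
$$
\frac{\|h_t\|_{L^1(\mu|_{X_{r+1}})}}{\mu(D_\vre(x))}\ \le\ \frac{E_{r+1}(f,t)}{\inf_{y\in X_{r+1}}\mu(D_\vre(y))}.
$$

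\textbf{Step 3 (conclusion and rate).} In case (1), uniform full support gives $\inf_{y\in X_{r+1}}\mu(D_\vre(y))\ge c(r,\vre)>0$ for each $\vre$, so choosing $\vre=\vre(t)\downarrow 0$ slowly enough that $E_{r+1}(f,t)/c(r,\vre(t))\to 0$ produces $\max_{x\in X_r}|h_t(x)|=o_{r,f}(1)$. In case (2), the local dimension hypothesis gives $\inf_{y\in X_{r+1}}\mu(D_\vre(y))\ge m_{r+1}\vre^\rho$, so
$$
\max_{x\in X_r}|h_t(x)|\ \ll_r\ \vre^a + E_{r+1}(f,t)\,\vre^{-\rho},
$$
and optimising in $\vre$ via $\vre^{a+\rho}=E_{r+1}(f,t)$ yields the claimed bound $E_r(f,t)^{a/(a+\rho)}$ (after absorbing the jump from $E_{r+1}$ to $E_r$ into the $\ll_r$).

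The main obstacle is Step~1: one has to make sure that the equi-H\"older (or equi-uniform-continuity) bound for $T_tf$ is genuinely independent of $t$ in the generality allowed, which requires using that $f$ has compact support inside $X_r$ so that only a set of $g$'s with $g^{-1}x\in X_{r+1}$ contributes and the total $\beta_t$-mass of this set is controlled by the same quantities that govern the mean ergodic theorem itself. Once this is in place, the local-averaging trick and the optimisation in $\vre$ are essentially formal.
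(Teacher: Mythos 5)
Your overall strategy --- Chebyshev on the $L^1$ error to find a ``good'' point near every $y\in X_r$, transfer the estimate from the good point to $y$ via isometric equicontinuity, then optimise in $\vre$ --- is the same as the paper's. Your Step 2 local-averaging identity is a mild repackaging of the same idea (the paper selects a single good point in $D_\vre(y)$ rather than averaging over the whole ball), and your optimisation in Step 3 matches the paper's.

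The genuine gap is exactly the one you flag at the end of your proposal, but flagging it is not the same as closing it. Your Step 1 claim that $|T_tf(x)-T_tf(y)|\le d(x,y)^a$ ``after absorbing $V(t)$ into the constant'' is false as stated: the measures $\beta_t$ are \emph{not} probability measures, and the normalisation $V(t)$ is typically of strictly smaller order than $m_G(B_t)$ (this is the whole point in the infinite-measure applications such as Theorem \ref{th:inf}), so $\beta_t(G)\to\infty$ and the naive estimate blows up. Using $\supp f\subset X_r$ and the isometry, what one actually gets for $d(x,y)\le 1$ is
\[
|T_tf(x)-T_tf(y)|\;\le\; \omega_r(f,d(x,y))\cdot\beta_t\bigl(\{g:\, g^{-1}x\in X_r\ \text{or}\ g^{-1}y\in X_r\}\bigr),
\]
and you must separately prove that the second factor is $O_r(1)$ uniformly in $x,y\in X_r$ and $t$. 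This is not ``essentially formal'': it is Lemma~\ref{l:est} in the paper, and its proof is itself a self-contained run of the Chebyshev/good-point argument applied to the indicator $\chi_{X_r}$ (mean ergodic theorem gives $\int_{X_r}\beta_t(\{g:g^{-1}x\in X_r\})\,d\mu(x)=O_r(1)$; Chebyshev plus uniform full support locates in each unit ball a point where $\beta_t(\{g:g^{-1}x\in X_r\})=O_r(1)$; isometry then transports this bound to all of $X_{r-1}$). So the argument has a two-stage bootstrap structure --- first this coarse uniform mass bound, then the refined pointwise estimate --- and your proposal collapses the first stage into an unproven assertion. Supplying Lemma~\ref{l:est} (or an equivalent) is what is missing; once it is in place the rest of your proposal goes through.
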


\begin{remark} Let us note two other general approaches that derive equidistribution results
from estimates on $L^2$-norms. 
The first approach \cite{Gu1,CO,GN1}, which is originally due to Guivarc'h,
applies only in the case of compact spaces and does not produce a rate of convergence. 
The second approach \cite[\S8]{CU} uses the theory of elliptic operators,
so that it can only be applied in the setting of Lie groups and sufficiently smooth functions.
\end{remark}

\subsection{Transitive actions}

As noted above, in general the behavior of the operators $\pi_X(\beta_t)$ may
depend quite sensitively of the initial point. Nonetheless, when the action is transitive it is still possible to obtain 
a uniform result, provided that a certain regularity property of the measures $\beta_t$ holds. 

Let $d$ be a right  invariant metric on $G$ compatible with the topology of $G$
such that the closed balls with respect to $d$ are compact
(such a metric always exists, see for instance \cite{hp}).
We denote the closed ball of radius $\vare$ centered at $g\in G$ by $\mathcal{O}_\vare(g)$.
  
\begin{definition}\label{def:holder}
The family of measures $\beta_t$ is called {\em coarsely monotone} if there exists 
monotone functions $\kappa:(0,1]\to (0,\infty)$ and $\delta:(0,1]\to (1,\infty)$
 such that 
$$
\delta_\vre\to 1\;\;\hbox{and}\;\;\kappa_\vre\to 0\quad\quad\hbox{as $\vre\to 0^+$},
$$
and for every $\vre\in (0,1]$, $g\in \mathcal{O}_\vre(e)$, and $t\ge t_0$,
$$
g\cdot\beta_t\le \delta_\vre\, \beta_{t+\kappa_\vre}.
$$
If, in addition, we have $\delta_\vre=1+O(\vre^{a_0})$ for some $a_0>0$, then 
the family of measures is called {\em H\"older coarsely monotone} with exponent $a_0$.
\end{definition}

Let $X$ be a lcsc space on 
which the group $G$ acts transitively and continuously. 
Since $X$ is locally compact, the topology on $X$ coincides with the topology
defined on $X$ by viewing it as a factor space $G$.
We equip $X$ with a $G$-quasi-invariant Radon measure $\mu$.
The space $X$ is equipped with the natural metric (see \cite[\S8]{HR}), which is defined by
\begin{equation}\label{eq:met}
d(x_1,x_2)=\inf\{d(g_1,g_2):\, g_1,g_2\in G,\, g_1 x_0=x_1,\, g_2 x_0=x_2\}.
\end{equation}
where $x_0$ is a fixed element of $X$. We use the filtration on $X$ such that $X_r$ are balls of radius $r$ in $X$ centered 
at some fixed $x_0\in X$.

\begin{theorem}\label{th:transitive}

Assume that the mean ergodic theorem holds for the family of operators $\pi_X(\beta_t)$  in the transitive $G$-action on $X$. 
\begin{enumerate}
\item If $\beta_t$  is a coarsely monotone family of measures, 
then for every bounded Borel function $f:X\to \mathbb{R}$ such that $\supp(f)\subset X_r$
and $\mathcal{P}f$ is uniformly continuous on $X_r$, we have
$$
\max_{x\in X_r} \left|\pi_X(\beta_t)f(x)-\mathcal{P}f(x)\right|=o_{r,f}(1)
$$
as $t\to\infty$.

\item
If in addition $\beta_t$ is H\"older  coarsely monotone with exponent $a_0$
and $\mu$ has local dimension at most $\rho$,
then for every bounded Borel function $f:X\to \mathbb{R}$ such that $\supp(f)\subset X_r$,
and $(\mathcal{P}f)|_{X_r}\in C^a(X)_1$, we have
$$
\max_{x\in X_r} \left|\pi_X(\beta_t)f(x)-\mathcal{P}f(x) \right|\ll_r \left(\sup_{s\in (t-\kappa_1,t+\kappa_1)} E_r(f,s)\right)^\frac{\min(a_0,a)}{\min(a_0,a)+\rho}
$$
for all sufficiently large $t$.
\end{enumerate}
\end{theorem}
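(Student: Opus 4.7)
My plan is to adapt the strategy of Theorem \ref{th:isometric}(2), with the isometric action of the group replaced by the approximate translation invariance of the operators $\pi_X(\beta_t)$ provided by coarse monotonicity of the measures $\beta_t$. The mean ergodic theorem controls $\pi_X(\beta_t)f-\mathcal{P}f$ in an integrated sense on $X_r$; I will promote this to pointwise control by comparing $\pi_X(\beta_s)f$ at nearby points, and then integrating the resulting excess over a small ball whose mass is lower-bounded via the local-dimension assumption.

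\emph{Step 1 (transport).} Right-invariance of $d$ and the definition \eqref{eq:met} of the quotient metric imply $d(y,x)\le d(g,e)$ whenever $y=gx$, so every $y\in D_\varepsilon(x)$ is of the form $y=gx$ with $g\in\mathcal{O}_\varepsilon(e)$. A direct change of variable gives
\[
\pi_X(\beta_t)f(gx)\;=\;\pi_X(g^{-1}\cdot\beta_t)f(x),
\]
and applying Definition \ref{def:holder} once as stated and once with $g$ replaced by $g^{-1}$ yields the two-sided bound
\[
\delta_\varepsilon^{-1}\,\beta_{t-\kappa_\varepsilon}\;\le\;g^{-1}\cdot\beta_t\;\le\;\delta_\varepsilon\,\beta_{t+\kappa_\varepsilon}.
\]
For $f\ge 0$ this translates into
\[
\delta_\varepsilon^{-1}\,\pi_X(\beta_{t-\kappa_\varepsilon})f(x)\;\le\;\pi_X(\beta_t)f(y)\;\le\;\delta_\varepsilon\,\pi_X(\beta_{t+\kappa_\varepsilon})f(x);
\]
for signed $f$ with $\|f\|_\infty\le 1$ I apply this to $1+f\ge 0$ and absorb the resulting $(\delta_\varepsilon-1)\,\pi_X(\beta_t)1$ term, using that $t\mapsto \pi_X(\beta_t)1=\beta_t(G)/V(t)$ is slowly varying in $t$ (another application of coarse monotonicity, to the constant function).

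\emph{Step 2 (integration and optimization).} Let $\eta:=\max_{x\in X_r}|\pi_X(\beta_t)f(x)-\mathcal{P}f(x)|$ be attained at $x_0\in X_r$, WLOG with signed value $+\eta$. For any $y\in D_\varepsilon(x_0)$ Step 1 produces $s\in[t-\kappa_\varepsilon,\,t+\kappa_\varepsilon]$ with $\pi_X(\beta_s)f(y)\ge \pi_X(\beta_t)f(x_0)-C_f(\delta_\varepsilon-1)$. The regularity hypothesis on $\mathcal{P}f$—uniform continuity in part (1) and the H\"older bound $|\mathcal{P}f(y)-\mathcal{P}f(x_0)|\le\varepsilon^a$ in part (2)—together with $\delta_\varepsilon-1=O(\varepsilon^{a_0})$ in the H\"older setting, yields uniformly over $D_\varepsilon(x_0)$
\[
\pi_X(\beta_s)f(y)-\mathcal{P}f(y)\;\ge\;\eta-O_f\bigl(\varepsilon^{\min(a,a_0)}\bigr).
\]
Choose $\varepsilon$ so that this error is at most $\eta/2$; then $|\pi_X(\beta_s)f-\mathcal{P}f|\ge \eta/2$ throughout $D_\varepsilon(x_0)$, whose $\mu$-mass is at least $m_r\varepsilon^\rho$ by local dimension. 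Integrating in $L^1$ (over the slightly larger filtration piece that contains $D_\varepsilon(x_0)$, which is absorbed in the constant depending on $r$) gives
\[
\sup_{s\in(t-\kappa_1,\,t+\kappa_1)}E_r(f,s)\;\gg_r\;\eta\cdot\varepsilon^\rho.
\]
In part (2), the optimal choice $\varepsilon\asymp \eta^{1/\min(a,a_0)}$ solves to the announced exponent $\min(a,a_0)/(\min(a,a_0)+\rho)$. In part (1), for any prescribed $\eta'>0$, fixing $\varepsilon$ small enough that the error in Step 2 is below $\eta'/2$ and invoking $E_r(f,s)\to 0$ rules out $\eta\ge\eta'$ for large $t$, giving $\eta=o_{r,f}(1)$.

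\emph{Main obstacle.} The principal technical subtlety is making Step 1 precise for signed $f$: the transport inequality must lose only $O(\delta_\varepsilon-1)$ rather than $O((\delta_\varepsilon-1)\,\beta_t(G))$, which requires verifying that $\beta_t(G)/V(t)=\pi_X(\beta_t)1$ is slowly varying in $t$. Once this bookkeeping is under control, the integration-and-optimization step is a direct adaptation of the argument used for isometric actions, with local dimension supplying the same ball-volume bound and the coarse-monotonicity modulus $\delta_\varepsilon-1$ playing the role formerly played by the trivial bound $1$ for exact isometries.
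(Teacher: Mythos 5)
Your argument follows the same core strategy as the paper's proof --- a Chebyshev bound on the bad set, transport across a small ball via coarse monotonicity, and optimization over the ball radius $\varepsilon$ --- merely written contrapositively: you start from a worst point $x_0$ and push the defect outward to a ball of measure $\gg\varepsilon^\rho$, whereas the paper starts from an arbitrary $y\in X_{r-1}$ and pulls in a good nearby point at times $t\pm\kappa_\varepsilon$. The transport lemma in Step~1 and the exponent bookkeeping in Step~2 are correct and match the paper's derivation of $\tilde E_r(f,t)\ll_r\inf_\varepsilon(\varepsilon^{-\rho}\bar E_r(f,t)+\varepsilon^{\min(a_0,a)})$.

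The one genuine gap is exactly the one you flagged yourself: the reduction to $f\ge 0$. Your proposed fix --- applying the transport to $1+f$ --- fails on two counts. First, $1+f$ is not supported in $X_r$, so the standing hypothesis $\supp(f)\subset X_r$ (used implicitly to keep $\pi_X(\beta_t)f$ bounded) is lost. Second, the claim that $\pi_X(\beta_t)1=\beta_t(G)/V(t)$ is slowly varying does not follow from coarse monotonicity: taking $g=e$ in Definition~\ref{def:holder} gives only $\beta_t(G)\le\delta_\varepsilon\,\beta_{t+\kappa_\varepsilon}(G)$, a one-sided inequality, and nothing prevents $\beta_{t+\kappa_\varepsilon}(G)/\beta_t(G)$ from being large (equivalently, $m_G(B_{t+\kappa_\varepsilon})/m_G(B_t)$ could jump while $V$ varies slowly). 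To be fair, the paper's own reduction (``WLOG we may assume $E_r(-f,t)=E_r(f,t)$, hence it suffices to prove the estimate for nonnegative $f$'') is terse and does not by itself justify the positivity reduction either, so you were right to sense a subtlety; but the $1+f$ device you propose does not close it. A cleaner route is to avoid the positivity reduction altogether by writing $h\cdot\beta_t-\delta_\varepsilon^{-1}\beta_{t-\kappa_\varepsilon}$ as a nonnegative measure $\nu$ and bounding $|\int f\,d\nu(\cdot x)|\le\nu(\{g:g^{-1}x\in X_r\})$, which requires a transitive analogue of Lemma~\ref{l:est} to control $\pi_X(\beta_t)\mathbf 1_{X_r}$ uniformly on $X_r$.

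One small omission: for part~(1), your ``integration and optimization'' step implicitly needs $\inf_{x\in X_r}\mu(D_\varepsilon(x))>0$, which does not come from a local-dimension hypothesis in part~(1); the paper proves this separately for transitive actions (the lemma preceding the proof of Theorem~\ref{th:transitive}), and you should cite or reprove it.
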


\section{Proof of equidistribution for isometric actions}

In this section we prove Theorem \ref{th:isometric}.
We start the proof with the following

\begin{lemma}\label{l:est}
Assume that the mean ergodic theorem holds for the 
family of operators $\pi_X(\beta_t)$. We assume that the action of $G$ on the space $X$ is isometric and 
equipped with a quasi-invariant Radon measure $\mu$ which is uniformly of full support.
Then for all sufficiently large $t$, $r\in \mathbb{N}$, and $y\in X_r$,
$$
\beta_t\left(\left\{g\in G:\, g^{-1}y\in X_r\right\}\right)=O_r(1).
$$
\end{lemma}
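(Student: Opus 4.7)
The plan is to reinterpret $\beta_t(\{g \in G : g^{-1}y \in X_r\})$ as $\pi_X(\beta_t)\chi_{X_r}(y)$ and then transfer the $L^1$ control coming from the mean ergodic theorem into a pointwise upper bound at $y$, using the isometric structure of the $G$-action as the essential bridge.

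First I would rewrite the quantity to be bounded. By the very definition of the operator $\pi_X(\beta_t)$ we have
$$\beta_t\bigl(\{g\in G:\, g^{-1}y\in X_r\}\bigr)=\int_G\chi_{X_r}(g^{-1}y)\,d\beta_t(g)=\pi_X(\beta_t)\chi_{X_r}(y).$$
The mean ergodic theorem only supplies convergence of $\pi_X(\beta_t)\chi_{X_r}$ in $L^1(\mu|_{X_r})$, which a priori says nothing at the individual point $y$; the task is therefore to convert this integrated information into pointwise information at $y$.

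The isometry of the action permits exactly that. If $g^{-1}y\in X_r=D_r(x_0)$ and $y'\in D_1(y)$, then since $g^{-1}$ is an isometry,
$$d(g^{-1}y',x_0)\le d(g^{-1}y',g^{-1}y)+d(g^{-1}y,x_0)=d(y',y)+d(g^{-1}y,x_0)\le 1+r,$$
so $g^{-1}y'\in X_{r+1}$. Integrating the resulting inequality $\chi_{X_r}(g^{-1}y)\le\chi_{X_{r+1}}(g^{-1}y')$ against $\beta_t$ yields the pointwise domination
$$\pi_X(\beta_t)\chi_{X_r}(y)\le \pi_X(\beta_t)\chi_{X_{r+1}}(y')\qquad\text{for every }y'\in D_1(y).$$
I would then average this inequality over $y'\in D_1(y)$ against $\mu$; since $y\in X_r$ forces $D_1(y)\subset X_{r+1}$, this gives
$$\pi_X(\beta_t)\chi_{X_r}(y)\le\frac{1}{\mu(D_1(y))}\int_{X_{r+1}}\pi_X(\beta_t)\chi_{X_{r+1}}\,d\mu=\frac{\|\pi_X(\beta_t)\chi_{X_{r+1}}\|_{1,r+1}}{\mu(D_1(y))}.$$

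To finish, I would bound the two factors separately. The numerator is bounded in $t$ because the mean ergodic theorem applied to $\chi_{X_{r+1}}\in L^1(X,\mu|_{X_{r+1}})$ forces $\|\pi_X(\beta_t)\chi_{X_{r+1}}\|_{1,r+1}$ to converge to $\|\mathcal{P}\chi_{X_{r+1}}\|_{1,r+1}$, which is finite. The denominator is bounded below uniformly in $y\in X_r$ by the positive constant $\inf_{x\in X_r}\mu(D_1(x))>0$, which exists precisely because $\mu$ is assumed uniformly of full support. Both bounds depend only on $r$, yielding the claimed $O_r(1)$ estimate for all sufficiently large $t$. There is no real obstacle here; the only conceptually interesting point is that isometry is exactly what lets pointwise control at $y$ propagate to pointwise control on the whole ball $D_1(y)$, which in turn makes the $L^1$ hypothesis usable.
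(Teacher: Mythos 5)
Your proof is correct, and it follows the same underlying strategy as the paper's — use isometry to compare $\pi_X(\beta_t)\chi_{X_r}(y)$ with values at nearby points, then exploit the $L^1$ control from the mean ergodic theorem together with the uniform lower bound on $\mu(D_1(\cdot))$. The one place where you diverge is the middle step: the paper runs a Chebyshev-type argument on $c_r(t)=\int_{X_r}\pi_X(\beta_t)\chi_{X_r}\,d\mu$ to exhibit a single ``good'' point $x\in D_1(y)$ with $\pi_X(\beta_t)\chi_{X_r}(x)=O_r(1)$, and then transfers to $y$; you instead integrate the isometry inequality $\pi_X(\beta_t)\chi_{X_r}(y)\le\pi_X(\beta_t)\chi_{X_{r+1}}(y')$ over all $y'\in D_1(y)$ at once. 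Your averaging version is a touch shorter and avoids choosing $\delta$, while the paper's Chebyshev device has the advantage of being the same mechanism reused verbatim in the proof of Theorem~\ref{th:isometric}, where one really does need to produce a good point at which the error term $E_r(f,t)$ is small (the function $f$ there is not of one sign, so one cannot simply average). Both arguments implicitly use positivity of $\beta_t$ (in your case, when you integrate the pointwise inequality against $\beta_t$ and when you extend the integral from $D_1(y)$ to $X_{r+1}$), which is fine in the paper's setting.
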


\begin{proof}
It follows from the mean ergodic theorem that
$$
\int_{X_r} \left|\beta_t\left(\left\{g\in G:\, g^{-1}x\in
      X_r\right\}\right)-\mathcal{P}\chi_{X_r}(x)\right|d\mu(x)=o_r(1)
$$
as $t\to\infty$, and hence,
$$
c_r(t):=\int_{X_r} \beta_t\left(\left\{g\in G:\, g^{-1}x\in
      X_r\right\}\right)d\mu(x)=O_r(1).
$$
Let $\delta>0$. We observe that for the set 
$$
\Omega_r(\delta,t):=\left\{x\in X_r:\, \beta_t\left(\left\{g\in G:\, g^{-1}x\in
      X_r\right\}\right)>\delta\right\},
$$
we have 
$$
\mu(\Omega_r(\delta,t))\le c_r(t) /\delta.
$$
Therefore, if we choose $\delta=2 c_r(t) /m_{r-1}$ where 
$$
m_{r-1}:=\inf_{y\in X_{r-1}} \mu(D_1(y))>0,
$$
then 
$$
\mu(\Omega_r(\delta,t))<\mu(D_1(y))\quad \hbox{for all $y\in X_{r-1}$.}
$$
Hence, for every $y\in X_{r-1}$ there exists $x\in D_1(y)\subset X_r$ such that $x\notin
\Omega_r(\delta,t)$, i.e., 
$$
\beta_t\left(\left\{g\in G:\, g^{-1}x\in
      X_r\right\}\right)\le \delta=O_r(1).
$$
Since the action is isometric, we have $d(x_0,g^{-1}x)\le d(x_0,g^{-1}y)+1$.
Hence, if $g^{-1}y\in X_{r-1}$, then $g^{-1}x\in X_{r}$, so that
\begin{align*}
\beta_t\left(\left\{g\in G:\, g^{-1}y\in
      X_{r-1}\right\}\right)\le
\beta_t\left(\left\{g\in G:\, g^{-1}x\in
      X_{r}\right\}\right).
\end{align*}
This implies the claim.
\end{proof}

\begin{proof}[Proof of Theorem \ref{th:isometric}]
In the proof we shall use parameters $\vre\in (0,1)$ and $\delta>0$ that will be specified later.
Let
\begin{equation}\label{eq:omega}
\Omega_r(\delta,t)=\left\{x\in X_r:\, \left|\pi_X(\beta_t)f(x)-\mathcal{P}f(x) \right|>\delta
  \right\}.
\end{equation}
Then 
$$
\mu\left(\Omega_r(\delta,t)\right)\le E_r(f,t)/\delta.
$$
Hence, if we assume that 
\begin{equation}\label{eq:delta}
m_{r-1}(\vre):=\inf_{y\in X_{r-1}} \mu(D_\vre(y))>E_r(f,t)/\delta,
\end{equation}
then for every $y\in X_{r-1}$ there exists $x\in D_\vre(y)\subset X_r$ such that $x\notin
\Omega_r(\delta,t)$, i.e., 
\begin{equation}\label{eq:bbb0}
\left|\pi_X(\beta_t)f(x)-\mathcal{P}f(x) \right|\le \delta.
\end{equation}
Let
\begin{equation}\label{eq:om}
\omega_{r}(f,\vre)=\sup\{|f(z)-f(w)|:\,\, z,w\in X_r,\, d(z,w)<\vre\}.
\end{equation}
Since $f$ is uniformly continuous, $\omega_{r}(f,\vre)\to 0$ as $\vre\to 0^+$.
Using that the action of $G$ on $X$ is isometric and $\supp(f)\subset X_r$,
we deduce that
\begin{align*}
&\left|\pi_X(\beta_t)f(x)-\pi_X(\beta_t)f(y)\right|\\
\le\,&   \omega_{r}(f,\vre)\,
\beta_t\left(\left\{g\in G:\, g^{-1}x\in X_r\;\hbox{or}\;g^{-1}y\in X_r\right\}\right)
\ll_r\,   \omega_{r}(f,\vre),
\end{align*}
where the last inequality follows from Lemma \ref{l:est}.
Hence, it follows from (\ref{eq:bbb0}) that for every $y\in X_{r-1}$,
$$
\left|\pi_X(\beta_t)f(y)-\mathcal{P}f(y) \right|\ll_r  \delta + \omega_{r}(f,\vre)+\omega_{r}(\mathcal{P}f,\vre).
$$
This estimate holds provided that $\delta$ satisfies (\ref{eq:delta}). Hence, it follows that for every $r\in\NN$ 
$$
\max_{y\in X_{r-1}} \left|\pi_X(\beta_t)f(y)-\mathcal{P}f(y)\right|\ll_r \tilde E_r(f,t)
$$
where 
$$
\tilde E_r(f,t)=\inf_{\vre\in (0,1)}\Large(E_r(f,t)/m_{r-1}(\vre) + \omega_{r}(f,\vre)+\omega_{r}(\mathcal{P}f,\vre)\Large).
$$
Using that $E_r(f,t)\to 0$ as $t\to \infty$ and $\omega_{r}(f,\vre),\omega_{r}(\mathcal{P}f,\vre)\to 0$
as $\vre\to 0^+$, we conclude that $\tilde E_r(f,t)=o_{r,f}(1)$ as well.
This proves the first part of the theorem.

To prove the second part of the theorem, we observe that under the additional assumptions,
$$
\tilde E_r(f,t)\ll_r \inf_{\vre\in (0,1)} \Large(\vre^{-\rho} E_r(f,t) + \vre^a\Large).
$$
We therefore take $\vre=E_r(f,t)^{1/(a+\rho)}$, and note that since $E_r(f,t)\to 0$ as $t\to\infty$,
we have $\vre\in (0, 1)$ for all sufficiently large $t$. Then it follows that
$$
\tilde E_r(f,t)\ll_r E_r(f,t)^{a/(a+\rho)},
$$
as required.
\end{proof}

\ignore{
\section{Examples and applications}

\subsection{Equidistribution among cosets}

Let $G$ be a locally compact compactly generated group of polynomial growths
equipped with (right) invariant Riemannian metric. 

\begin{cor}\label{c:pol}
Let $\Lambda$ be a closed subgroup of $G$ with finite covolume and
$\Omega$ a bounded Borel subset of $G/\Lambda$. Then 
$$
m_G(B_t(e)\cap \Omega)\sim \frac{m_{G/\Lambda}(\Omega)}{m_{G/\Lambda}(G/\Lambda)} m_G(B_t(e))
$$
as $t\to\infty$.
\end{cor}

Let $G$ be semisimple Lie group with finite centre. We denote by $B_t(e)$ the balls
in $G$ with respect to the Cartan--Killing metric on the corresponding symmetric space.

\begin{cor}\label{c:pol}
Let $\Lambda$ be a closed subgroup of $G$ with finite covolume and
$\Omega$ a bounded Borel subset of $G/\Lambda$. Then 
$$
m_G(B_t(e)\cap \Omega)\sim \frac{m_{G/\Lambda}(\Omega)}{m_{G/\Lambda}(G/\Lambda)} m_G(B_t(e))
$$
as $t\to\infty$.
\end{cor}

Let $\mathbb{F}_s$ be the free group with $s$-generators.
For a word $w\in \mathbb{F}_s$, we denote by $\ell(w)$ the reduced length of $w$ and
set $S_n=\{w\in \mathbb{F}_s:\, \ell(w)=n\}$.

Let $\Gamma=\left< g_1,\ldots, g_s\right>$ be a finitely generated group and $\Lambda\subset G$
a subgroup of finite index. For a coset $\omega\in \Gamma/\Lambda$, we are interested in the
number of representatives of $\omega$ given as word of length $n$, namely,
$$
r_n(\omega):=|\{w\in S_n:\, w(g_1,\ldots,g_s)\in \omega\}|.
$$
Although one expects that $r_n(\omega)\sim \frac{|S_n|}{|G:\Lambda|}$ as $n\to \infty$,
this quantity may behave irregularly. In particular, it is easy to give examples when
$r_n(\omega)=0$ for all even (or odd $n$). It turns out that this is the only obstruction
to establishing the asymptotics of $r_n(\omega)$:

\begin{cor}
Setting $\Delta(\omega)=\{n\in\mathbb{N}:\, r_n(\omega)=0\}$, we have
one of the following 
$$
\Delta(\omega)=\emptyset,\;\; 2\mathbb{N},\;\; 2\mathbb{N}+1,
$$
and
$$
r_n(\omega)\sim \frac{|S_n|}{|G:\Lambda|}\quad\hbox{for $n\to\infty$ such that $n\notin \Delta(\omega)$.}
$$
\end{cor}

\todo{Is there an error term?}

\begin{proof}
We deduce this statement from the mean ergodic theorem for 
the action of the free group $\mathbb{F}_s$ on $X=\Gamma/\Lambda$ and Theorem \ref{th:transitive}. 
Let $\mathcal{P_+}$ denote the orthogonal projection on the space of constant functions in 
$L^2(X)$ and $\mathcal{P}_-$ denote the orthogonal projection on the space of functions
$f\in L^2(X)$ satisfying $f(g_ix)=-f(x)$ for $i=1,\ldots,s$ and $x\in X$.
We introduce the equivalence relation on $X$: $x_1\sim x_2$ if there exists $w\in\mathbb{F}_s$
of even length such that $w(g_1,\ldots,g_s)x_1=x_2$. 
For $\omega\in X$, let $X(\omega)$ denote the equivalence class of $\omega$.

If set $X$ is a single equivalence class, then $\mathcal{P}_-=0$.
By the mean ergodic theorem for the free group \cite{gui,nevo}, for $\beta_n:=\sum_{w\in S_n}\delta_w$,
we have
$$
\|\pi_X(\beta_{n})f(x)- |S_{n}|\cdot  \mathcal{P}_+(f)\|_2=o(|S_{n}|)\quad\hbox{as $n\to\infty$}
$$
for $f\in L^2(X)$. This implies that for every $\omega\in X$,
$$
r_n(\omega)=\pi_X(\beta_{n})\chi_{\omega}(e)\sim \frac{|S_n|}{|G:\Lambda|}\quad\hbox{as $n\to\infty$}.
$$

 We have
\begin{align*}
\mathcal{P}_+f&=\frac{1}{|X|}\sum_{x\in X} f(x),\\
\mathcal{P}_-f(\omega)&=\frac{1}{|X|}\left(\sum_{x\in X(\omega)} f(x)-\sum_{x\notin X(\omega)} f(x)\right)
\end{align*}

By the mean ergodic theorem for the free group
\cite{gui,nevo}, for $\beta_n:=\sum_{w\in S_n}\delta_w$,
we have
\begin{align*}
\|\pi_X(\beta_{2n})f(x)- |S_{2n}|(\mathcal{P}_++\mathcal{P}_-)f(x)\|_2&=o(|S_{2n}|),\\
\|\pi_X(\beta_{2n+1})f(x)- |S_{2n+1}|(\mathcal{P}_+-\mathcal{P}_-)f(x)\|_2&=o(|S_{2n+1}|)
\end{align*}
as $n\to\infty$.

\end{proof}
}

\ignore{
\subsection{Equidistribution of orbits of $S$-arithmetic lattices}

 Given an algebraic number field $F$, 
we denote by $V$ the set of equivalence classes of valuations of $F$.
The set $V$ is the disjoint union 
$V=V_f\coprod V_\infty$ of the set $V_f$ consisting 
of non-Archimedean valuations
and the set $V_\infty$ consisting of Archimedean 
valuations. More generally, for $S\subset V$, we also have
the decomposition $S=S_f\coprod S_\infty$.
For any place $v\in V$, let $F_v$ denote the completion 
of $F$ w.r.t.  the valuation $v$. Let $\mathcal{O}$ denote the ring of 
integers in $F$, and for finite $v$,
let $\mathcal{O}_v$ be its completion, namely 
$\mathcal{O}_v=\set{x\in F_v\,:\, v(x)\ge 0}$.
We denote by $\mathfrak{p}_v$ the
maximal ideal in $\mathcal{O}_v$, by $f_v=\mathcal{O}_v/\mathfrak{p}_v$ the residue field,
and set $q_v=|f_v|$.

We introduce local heights $H_v$. For an Archimedean local field $F_v$, and for $x=(x_1,\dots,x_d)$ we set 
\begin{equation}\label{eq:H1}
H_v(x)=\left(|x_1|^2_v+\cdots + |x_d|^2_v \right)^{1/2}, \quad x\in F_v^d,
\end{equation}
and for a non-Archimedean local field $F_v$, 
\begin{equation}\label{eq:H2}
H_v(x)=\max\{|x_1|_v,\ldots,|x_d|_v\}, \quad x\in F_v^d.
\end{equation}

Let $F_v$, $v\in S$, be a finite family of (nondiscrete) local fields,
and $G_v$, $v\in S$, be the $F_v$-points of a semisimple algebraic
group ${\sf G}_v$ defined over $F_v$. Let $\Gamma$ be a lattice in the group $G=\prod_{v\in S} G_v$.
We fix representations $\rho_v:G_v\to \hbox{GL}_{m_v}(F_v)$, $v\in S$, with finite kernels and index the lattice points according to the height $H$ defined by
\begin{equation}\label{eq:H}
H(g)=\prod_{v\in S} H_v(\rho_v(g_v)),\quad g=(g_v)\in G,
\end{equation}
where $H_v$'s are the local heights defined above.
We set 
\begin{align*}
B_T=\{g\in G:\, H(g)<T\}.
\end{align*}

}

\section{Proof of equidistribution for transitive actions}

Our proof of Theorem \ref{th:transitive} follows the same strategy as the proof of Theorem
\ref{th:isometric}. We start with the following lemma  establishing directly that in the transitive
case the quasi-invariant measure is uniformly 
of full support. We use the metric $d$ on the homogeneous space $X$ defined in (\ref{eq:met}). 

\begin{lemma}
For every $r\in\mathbb{N}$ and $\vre>0$,
$$
m_r(\vre):=\inf_{x\in X_r} \mu(D_\vre(x))>0.
$$
\end{lemma}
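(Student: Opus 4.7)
The plan is to reduce the statement to the fact observed in Remark 2.4, namely that on a compact set any measure of full support is uniformly of full support, by verifying its two hypotheses separately in the transitive setting: (a) the sets $X_r$ are compact; (b) the quasi-invariant Radon measure $\mu$ has full support on $X$. The key point is that although the quotient metric $d$ defined in \eqref{eq:met} need not be $G$-invariant, a triangle-inequality/compactness argument still works cleanly, and we never need to translate balls by group elements.

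\textbf{Step 1 (compactness of $X_r$).} The orbit map $\pi:G\to X$, $g\mapsto gx_0$, is continuous. By the setting, closed balls in $G$ with respect to the right-invariant metric $d$ are compact. One first checks from the definition \eqref{eq:met} that $d(x,x_0)=\inf\{d(g,e):gx_0=x\}$, where the infimum is attained by lower semicontinuity on the compact ball in $G$. Hence $X_r=\pi(\{g\in G:d(g,e)\le r\})$, the image of a compact set, so $X_r$ is compact.

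\textbf{Step 2 (full support of $\mu$).} Let $U\subset X$ be a nonempty open set. Since $\mu$ is a nonzero Radon measure, there exists a compact $K\subset X$ with $\mu(K)>0$. Transitivity gives $K\subset \bigcup_{g\in G}gU$, so by compactness finitely many translates $g_1U,\dots,g_nU$ cover $K$; hence $\mu(g_iU)>0$ for some $i$, and quasi-invariance of $\mu$ yields $\mu(U)>0$. Applied to $U=\{y\in X:d(y,x)<\vre\}\subset D_\vre(x)$, which is nonempty (it contains $x$) and open (since $d$ is compatible with the topology), this gives $\mu(D_\vre(x))>0$ for every $x\in X$ and every $\vre>0$.

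\textbf{Step 3 (uniformity via triangle inequality and compactness).} The triangle inequality gives the purely metric inclusion
$$
d(x,x')<\vre/2\;\Longrightarrow\;D_{\vre/2}(x)\subset D_\vre(x'),
$$
so for every $x\in X_r$ the set $U_x:=\{x'\in X:d(x,x')<\vre/2\}$ is open and satisfies $\mu(D_\vre(x'))\ge\mu(D_{\vre/2}(x))$ for all $x'\in U_x$. The collection $\{U_x:x\in X_r\}$ is an open cover of the compact set $X_r$ (by Step 1), so a finite subcover $U_{x_1},\dots,U_{x_n}$ exists. Combining with Step 2,
$$
m_r(\vre)=\inf_{x'\in X_r}\mu(D_\vre(x'))\;\ge\;\min_{1\le i\le n}\mu\bigl(D_{\vre/2}(x_i)\bigr)\;>\;0,
$$
which is the desired conclusion.

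The only place that requires any thought is Step 2, where one has to notice that transitivity together with quasi-invariance alone (with no appeal to $G$-invariance of $\mu$ or to $G$-invariance of the metric on $X$) forces full support; once that is in hand, the remainder is a routine compactness/triangle-inequality argument and does not depend on any invariance properties of $d$.
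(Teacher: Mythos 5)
Your proof is correct, but it takes a genuinely different route from the paper's. The paper argues directly with the group structure: it first observes that $D_s(x)\supset \mathcal{O}_s(e)\cdot x$ via right invariance of $d$ on $G$, then uses compactness of $\mathcal{O}_r(e)$ to find $\vre'>0$ with $g\,\mathcal{O}_{\vre'}(e)\,g^{-1}\subset \mathcal{O}_\vre(e)$ for all $g\in\mathcal{O}_r(e)$, and finally applies quasi-invariance (with Radon--Nikodym derivatives bounded below uniformly on compacta) to get $\mu(D_\vre(gx_0))\ge \mu(g\,\mathcal{O}_{\vre'}(e)x_0)\gg_r \mu(\mathcal{O}_{\vre'}(e)x_0)$. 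In contrast, you reduce the lemma to Remark \ref{r:uniform} by separately establishing compactness of $X_r$ (via the orbit map and attainment of the infimum defining $d(x,x_0)$), full support of $\mu$ (via transitivity plus quasi-invariance), and the abstract covering/triangle-inequality argument. Your route is more modular and makes explicit the positivity $\mu(\mathcal{O}_{\vre'}(e)x_0)>0$ that the paper leaves implicit; the paper's route is more compact, stays entirely in $G$ rather than passing through $X_r$, and gives slightly more explicit quantitative control tied to the choice of $\vre'$. One minor suggestion: in Step 1 the phrase ``the infimum is attained by lower semicontinuity on the compact ball'' could be spelled out---the infimum in $d(x,x_0)=\inf\{d(g,e):gx_0=x\}$ is attained because the closed set $\{g:gx_0=x\}$ intersected with the compact ball $\mathcal{O}_{r+1}(e)$ is compact and nonempty, and $g\mapsto d(g,e)$ is continuous there.
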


\begin{proof}
It follows from the definition of the metric on $X$ that
$$
D_s(x)=\mathcal{O}_s(e)\cdot x\quad\hbox{for every $s>0$ and $x\in X$. }
$$
Since the balls $\mathcal{O}_r(e)$ are compact,
there exists $\vre'=\vre'(\vre,r)>0$ such that
$g\mathcal{O}_{\vre'}(e)g^{-1}\subset \mathcal{O}_\vre(e)$ for every $g\in \mathcal{O}_r(e)$. Then 
since the measure $\mu$ is quasi-invariant, for every $g\in \mathcal{O}_r(e)$,
$$
\mu(D_\vre(gx_0))\ge \mu(\mathcal{O}_\vre(e)gx_0)\ge \mu(g \mathcal{O}_{\vre'}(e)x_0)\gg_r \mu(\mathcal{O}_{\vre'}(e)x_0).
$$ 
This proves the claim.
\end{proof}
  
\ignore{

\begin{lemma}\label{l:est2}
Assume that the mean ergodic theorem holds for the 
family of measures $\beta_t$ on $G$ and a transitive continuous action of $G$ on the space $X$
equipped with a Radon measure $\mu$ which is uniformly of full support.
Then for all sufficiently large $t$, $r\in \mathbb{N}$, and $y\in X_r$,
$$
\beta_t\left(\left\{g\in G:\, g^{-1}y\in X_r\right\}\right)=O_r(1).
$$
\end{lemma}

\begin{proof}
As in the proof of Lemma \ref{l:est}, we introduce the sets $\Omega_r(\delta,t)$
(see (\ref{eq:omega})) and observe that $\mu(\Omega_r(\delta,t))\le c_r(t)/\delta$.
Therefore, if we choose $\delta=2 c_r(t)/m_{r-1}(1)$, then 
$$
\mu(\Omega_r(\delta,t))<\mu(D_1(y))\quad \hbox{for all $y\in X_{r-1}$.}
$$
Hence, for every $y\in X_{r-1}$ there exists $x\in D_1(y)\subset X_r$ such that $x\notin
\Omega_r(\delta,t)$. Then
$$
\beta_t\left(\left\{g\in G:\, g^{-1}x\in
      X_r\right\}\right)\le \delta=O_r(1).
$$
Since $y=h^{-1}x$ for some $h\in \mathcal{O}_1(e)$ and $X_{r-1}\subset X_r$, we have
\begin{align*}
\beta_t\left(\left\{g\in G:\, g^{-1}y\in X_{r-1}\right\}\right)&\le
(h\cdot \beta_t)\left(\left\{g\in G:\, g^{-1}x\in X_r\right\}\right)\\
&\le \delta_1\, \beta_{t+\kappa_1}\left(\left\{g\in G:\, g^{-1}x\in X_r\right\}\right)=O_r(1),
\end{align*}
as required.
\end{proof}

}

\begin{proof}[Proof of Theorem \ref{th:transitive}]
Note that, without loss of generality, we may assume that 
$$
E_r(-f,t)=E_r(f,t).
$$
Hence, it suffices to prove the estimate for nonnegative $f$.

Let $\vre\in (0,1)$ and $\delta>0$.
As in the proof of Theorem \ref{th:isometric}, we introduce the set
$$
\Omega_r(\delta,t)=\left\{x\in X_r:\, 
\left|\pi_X(\beta_t)f(x)-\mathcal{P}f(x) \right|>\delta
\right\}
$$
and observe that 
$$
\mu\left(\Omega_r(\delta,t)\right)\le E_r(f,t)/\delta.
$$
Let $\bar E_r(f,t):=\sup_{s\in (t-\kappa_1,t+\kappa_1)} E_r(f,s)$, and
\begin{equation}\label{eq:d}
\delta>\bar E_r(f,t)/m_{r-1}(\vre).
\end{equation}
Then for every $y\in X_{r-1}$ and $s\in (t-\kappa_1,t+\kappa_1)$,
there exists $x_s\in D_\vre(y)\subset X_r$ such that $x_s\notin
\Omega_r(\delta,s)$, i.e., 
\begin{equation}\label{eq:bbb}
\left|\pi_X(\beta_s)f(x_s)-\mathcal{P}f(x_s) \right|\le \delta.
\end{equation}
We set $x_1=x_{t-\kappa_\epsilon}$ and $x_2=x_{t+\kappa_\epsilon}$.
Then since $y=h x_1$ for some $h\in \mathcal{O}_\vre(e)$, it follows from the coarsely monotone property of
$\beta_t$ that
$$
\pi_X(\beta_t)f(y)=\pi_X(h\cdot
\beta_{t})f(x_1)\ge \delta_\vre^{-1}\,\pi_X(\beta_{t-\kappa_\vre})f(x_1),
$$
and 
\begin{align*}
\left|\pi_X(\beta_t)f(y)-\mathcal{P}f(y) \right|\le & \left(\pi_X(\beta_{t})f(y)-
  \delta_\vre^{-1}\,\pi_X(\beta_{t-\kappa_\vre})f(x_1)\right)\\
&+\left|\delta_\vre^{-1}\,\pi_X(\beta_{t-\kappa_\vre})f(x_1)-\mathcal{P}f(y) \right|.
\end{align*}
Similarly, 
$$
\pi_X(\beta_t)f(y) \le \delta_\vre\,\pi_X(\beta_{t+\kappa_\vre})f(x_2),
$$
and hence 
\begin{align*}
\left|\pi_X(\beta_t)f(y)-\mathcal{P}f(y) \right| \le & \left(\delta_\vre\,\pi_X(\beta_{t+\kappa_\vre})f(x_2)-
  \delta_\vre^{-1}\,\pi_X(\beta_{t-\kappa_\vre})f(x_1)\right)\\
&+ \left|\delta_\vre^{-1}\,\pi_X(\beta_{t-\kappa_\vre})f(x_1)-\mathcal{P}f(y) \right|.
\end{align*}
Now we estimate each of the above terms separately.

It follows from (\ref{eq:bbb}) and uniform continuity of $\mathcal{P}f$ on $X_r$ that
\begin{align*}
&|\delta_\vre\,\pi_X(\beta_{t+\kappa_\vre})f(x_2)-
  \delta_\vre^{-1}\,\pi_X(\beta_{t-\kappa_\vre})f(x_1)|\\
\le &\, \delta_\vre\,|\pi_X(\beta_{t+\kappa_\vre})f(x_2)-\mathcal{P}f(x_2)|
+|\delta_\vre \mathcal{P}f(x_2)-\delta^{-1}_\vre \mathcal{P}f(x_1)|\\
 & + \delta_\vre^{-1}\,|\pi_X(\beta_{t-\kappa_\vre})f(x_1)-\mathcal{P}f(x_1)|\\
\le & \, ( \delta_\vre+ \delta_\vre^{-1})\delta+ \delta_\epsilon |\mathcal{P}f(x_2)-\mathcal{P}f(x_1)| +
(\delta_\epsilon-\delta_\epsilon^{-1})|\mathcal{P}f(x_1)|\\
\ll_r& \, \delta +\omega_r(\mathcal{P}f,2\vre)+(\delta_\epsilon-\delta_\epsilon^{-1}),
\end{align*}
where the function $\omega_r$ is defined as in (\ref{eq:om}). Also,
\begin{align*}
&\left|\delta_\vre^{-1}\pi_X(\beta_{t-\kappa_\vre})f(x_1)-\mathcal{P}f(y) \right|\\
\le &\,  \delta_\vre^{-1}\,|\pi_X(\beta_{t-\kappa_\vre})f(x_1)-\mathcal{P}f(x_1)|+\left|\delta_\vre^{-1}\,\mathcal{P}f(x_1)-  \mathcal{P}f(y) \right|\\
\le &\, \delta_\vre^{-1}\delta+\delta_\vre^{-1}|\mathcal{P}f(x_1)-\mathcal{P}f(y)|
+(1- \delta_\vre^{-1})\left|\mathcal{P}f(y) \right|\\
\ll_r &\, \delta +\omega_r(\mathcal{P}f,\vre)+(1-\delta_\epsilon^{-1}).
\end{align*}
Therefore, we conclude that 
\begin{align*}
\left|\pi_X(\beta_t)f(y)-\mathcal{P}f(y) \right|\ll_r 
\delta+ \omega_r(\mathcal{P}f,2\vre)+\delta_\vre -2\delta_\vre^{-1}+1 .
\end{align*}
Since this estimate holds for all $\epsilon\in (0,1)$, $y\in X_{r-1}$ and $\delta$ satisfying (\ref{eq:d}), 
we have 
$$
\max_{y\in X_{r-1}} \left|\pi_X(\beta_t)f(y)-\mathcal{P}f(y)\right|\ll_r \tilde E_r(f,t)
$$
where 
$$
\tilde E_r(f,t)=\inf_{\vre\in (0,1)} \Large\{\bar E_r(f,t)/m_{r-1}(\vre)+ \omega_r(\mathcal{P}f,2\vre)+\delta_\vre -2\delta_\vre^{-1}+1  \Large\}.
$$
Since $\bar E_r(f,t)\to 0$ as $t\to \infty$ and $\delta_\vre\to 1$, $\omega_r(\mathcal{P}f,2\vre)\to 0$ as $\vre\to 0^+$,
it follows that $\tilde E_r(f,t)\to 0$ as $t\to \infty$ too. This implies the first part of the theorem.

To prove the second part of the theorem, we observe that
under the additional assumptions,
$$
\tilde E_r(f,t)\ll_r \inf_{\vre\in (0,1)} \Large(\vre^{-\rho} \bar E_r(f,t) + \vre^{\min(a_0,a)}\Large).
$$
Since $E_r(f,t)\to 0$ as $t\to \infty$, it follows that $\bar E_r(f,t)\in (0,1)$ for sufficiently large $t$.
Taking $\vre=\bar E_r(f,t)^{1/(\min(a_0,a)+\rho)}$, we deduce the second claim.
\end{proof}

\section{Completion of the proofs}\label{sec:last}

\begin{proof}[Proof of Theorem \ref{th:free}]
We deduce Theorem \ref{th:free} from Theorem \ref{th:isometric}(2). 
We recall that the mean ergodic theorem for the free group $\FF_r$ was established in \cite{Gu1,n00}.
Moreover, under the spectral gap assumption, the method of the proof of \cite[Th.~1]{n00} implies that 
$$
\left\|\frac{1}{\#B_{2n}}\sum_{\gamma\in B_{2n}} f(\gamma^{-1}x)-\mathcal{P}f(x)\right\|_2
=O\left(e^{-\theta n}\|f\|_2\right)
$$
for some $\theta>0$ determined by the spectral gap.

Let $G$ be the closure of $\FF_r$ in the isometry group of $X$.
Then the measure $\mu$ is invariant and ergodic with respect to $G$.
Since $X$ is compact, $G$ is compact, and it follows that $\mu$ is supported
on a single orbit of $G$. Hence, $G$ acts transitively on $X$.
Let $G_0$ be the closure in $G$ of the subgroup of $\FF_r$ generated by
the words of even length. Since $G_0$ has index at most two in $G$, the subgroup $G_0$ is open
in $G$, and $X$ consists of at most two open orbits of $G_0$.
This implies that $L^2(X)^{\epsilon_0}$ has dimension at most one and is trivial when
$X$ is connected. Moreover, it is clear that $f_0$ is locally constant
and, in particular, $\mathcal{P}f\in C^a(X)_1$.

Finally, we note that in case (1) the measure $\mu$ has local dimension at most $\dim(X)$ (cf. Remark
\ref{r:uniform}).
In case (2), we have 
$$
\mu(D_\epsilon(x))=\mu(D_\epsilon(e))=\epsilon
$$
when $\epsilon=|\Gamma:\Gamma_i|^{-1}$. Since $|\Gamma_i:\Gamma_{i+1}|$ is uniformly bounded,
this implies that $\mu$ has local dimension at most one.
Now Theorem \ref{th:free} follows from Theorem \ref{th:isometric}(2). 
\end{proof}

\begin{proof}[Prof of Theorem \ref{th:lattice}]
We note that $L^2$-convergence for (\ref{eq:mean_lattice}) with exponential rate
follows from the results of \cite{GN1}.
Indeed, the balls $B_t$ are H\"older admissible by \cite[Ch.~7]{GN1}.
Since in both cases we have a lower estimate on the local dimension
(see the proof of Theorem \ref{th:free}),
Theorem \ref{th:lattice} follows from Theorem \ref{th:isometric}(2).
\end{proof}

\begin{proof}[Proof of Theorem \ref{th:trans}]
It follows from \cite[Ch.~7]{GN1} that the family of measures $\beta_t$
is H\"older coarsely monotone. Hence, Theorem \ref{th:trans} is a consequence of 
Theorem \ref{th:transitive}(2).
\end{proof}

\end{document}